\documentclass[letterpaper,onefignum,onetabnum]{siamart220329}
\usepackage{lipsum}
\usepackage{amssymb,amsfonts}
\usepackage{amsmath,mathrsfs}
\usepackage{graphicx,subfigure}
\usepackage{epstopdf}
\usepackage{algorithm,algorithmic}
\usepackage{color}
\usepackage{bm}
\usepackage{booktabs,array}
\usepackage{multirow,multicol}
\usepackage{rotating}
\usepackage{geometry}
\geometry{a4paper,left=2.1cm,right=2.1cm}
\usepackage{nameref}
\allowdisplaybreaks
\ifpdf
\DeclareGraphicsExtensions{.eps,.pdf,.png,.jpg}
\else
\DeclareGraphicsExtensions{.eps}
\fi


\newcommand{\adots}{\reflectbox{\ensuremath{\ddots}}}

\newsiamremark{remark}{Remark}
\newsiamremark{hypothesis}{Hypothesis}
\crefname{hypothesis}{Hypothesis}{Hypotheses}
\newsiamthm{claim}{Claim}
\newsiamremark{example}{Example}

\headers{$\tau$ preconditioner for Riesz fractional diffusion equations}{Z.-H. She, X. Zhang, X.-M. Gu, et al.}

\title{A $\tau$-preconditioner for quasi-compact difference schemes solving variable-coefficient Riesz fractional diffusion equations
        }

\author{Zi-Hang She\thanks{Department of Mathematics, Hanshan Normal University, Chaozhou, Guangdong 521041, P.R. China (\email{zhshe@hstc.edu.cn})}
	\and Xue Zhang\thanks{School of Mathematics,
		Southwestern University of Finance and Economics,
		Chengdu, Sichuan 611130, P.R. China (\email{xuzhangym@smail.swufe.edu.cn})}
    \and Xian-Ming Gu\thanks{Corresponding author. School of Mathematics, Southwestern University of Finance and Economics,
		Chengdu, Sichuan 611130, P.R. China
	(\email{guxianming@live.cn})}
	\and Stefano Serra-Capizzano \thanks{Department of Science and High Technology,
		University of Insubria, Como 22100, Italy and Department of Information Technology,
		University of Uppsala, Uppsala SE-751 05, Sweden (\email{stefano.serrac@uninsubria.it}, \email{stefano.serra@it.uu.se})}.
}

\usepackage{amsopn}



\ifpdf
\hypersetup{
  pdftitle={DLRA for strongly damped wave equations},
  pdfauthor={Y.-L. Zhao}
}
\fi




\begin{document}

\maketitle

\begin{abstract}
In the present study, we consider the preconditioned generalized minimal residual (GMRES) method for the asymmetric linear systems arising from the $d$-dimensional Riesz space fractional diffusion equations (RSFDEs). The Crank-Nicolson scheme and a quasi-compact finite difference method are used to discretize the temporal derivative and Riesz space fractional derivatives in such RSFDEs, respectively. For the $d$-dimensional discretized RSFDEs, the corresponding coefficient matrix is the sum of a product of a $d$-level block tridiagonal matrix multiplying a diagonal matrix and a $d$-level Toeplitz matrix. We develop a sine transform based preconditioner (namely $\tau$ preconditioner) to accelerate the convergence of the GMRES method. Theoretical analysis shows that the upper bound of relative residual norm of the GMRES method with the proposed preconditioner is mesh-independent, which leads to a linear convergence rate. Numerical results are presented to confirm the theoretical results regarding the preconditioned matrix and to illustrate the efficiency of the proposed preconditioner.
\end{abstract}

\begin{keywords}
 GMRES method; Riesz space fractional diffusion equation; Numerical range; Toeplitz matrix; $\tau$-preconditioner
\end{keywords}

\begin{AMS}
34A08; 65F08; 65F10
\end{AMS}

\section{Introduction}\label{introduction}

In recent years, numerous fractional derivatives have emerged and been used to develop mathematical models for a wide range of real-world systems, characterized by memory, historical, or non-local effects, such as biology (species-competing model) \cite{Owolabi}, medical science (epidemic model) \cite{Parra}, finance (option-pricing models) \cite{ZhangH2016}, hydrology (contaminant transport model in porous media) \cite{Aghdam}. 

In the present work, we focus on developing efficient numerical methods for the following $d$-dimensional Riesz space fractional diffusion equation (RSFDE) with variable coefficients:
\begin{equation}
\label{RSFDE}
\left\{
\begin{aligned}
&e(\bm{x},t) \partial_t u(\bm{x},t)= \sum\limits_{i=1}^{d} \kappa_i \partial^{\alpha_i}_{x_i} u(\bm{x},t)+f(\bm{x},t),\quad  (\bm{x},t)\in \Omega \times(0,T],\\
&u(\bm{x},t)=0,  \hspace{5.4cm} \bm{x} \in \partial \Omega,\ t\in(0,T],\\
&u(\bm{x},0)=\psi(\bm{x}),   \hspace{4.8cm} \bm{x} \in \bar{\Omega}, \\
\end{aligned}
\right.
\end{equation}
where $1<\alpha_i<2$, $1 \leq i \leq d$, $\Omega = \prod_{i=1}^{d} (L_i,R_i)$ is the spatial region, $\partial \Omega$ is the boundary of $\Omega$, $\bar{\Omega} = \Omega \cup \partial \Omega$, $\kappa_i\geq 0$ are the diffusion coefficients satisfying 
the assumption $\sum^{d}_{i=1}\kappa_i\neq 0$, $\bm{x} = (x_1,x_2,\cdots,x_d) \in \mathbb{R}^{d}$, $u(\bm{x},t)$ is the unknown function to be found, $f(\bm{x},t)$ is the source term, $e(\bm{x},t)$ is a bounded positive-valued function with positive minimum, and $\psi(\bm{x})$ is the initial condition. For $\alpha_i \in (1,2)$, the Riesz space fractional derivative of order $\alpha_i$ for a function defined on $[L_i,R_i] \times [0,T]$, is defined by

\[ \partial^{\alpha_i}_{x_i} u(\bm{x},t)=\sigma_{\alpha_i}\left({_{L_i}}D_{x_i}^{\alpha_i} +{_{x_i}}D_{R_i}^{\alpha_i}\right)u(\bm{x},t),~~(\bm{x},t)\in \Omega \times [0,T],\]
where $\sigma_{\alpha_i}=-\dfrac{1}{2\cos(\alpha_i\pi/2)}$, and ${_{L_i}}D_{x_i}^{\alpha_i}$ and ${_{x_i}}D_{R_i}^{\alpha_i}$ are the left- and right-sided Riemann-Liouville (RL) spatial fractional derivatives defined, respectively, by
\begin{equation*}
\begin{aligned}
{_{L_i}}D_{x_i}^{\alpha_i}u(\bm{x},t)  = & \frac{1}{\Gamma(2-\alpha_i)} \frac{\partial^2}{\partial x_i^2}\int_{L_i}^{x_i}
  \frac{u(x_1,\cdots,x_{i-1},\xi,x_{i+1},\cdots, x_{d},t)}{(x_i-\xi)^{\alpha_i-1}}d\xi, \\
{_{x_i}}D_{R_i}^{\alpha_i}u(\bm{x},t) = & \frac{1}{\Gamma(2-\alpha_i)}\frac{\partial^2}{\partial x_i^2}\int_{x_i}^{R_i}
  \frac{u(x_1,\cdots,x_{i-1},\xi,x_{i+1},\cdots, x_{d},t)}{(\xi-x_i)^{\alpha_i-1}}d\xi,
\end{aligned}
\end{equation*}
where $\Gamma(\cdot)$ is the Gamma function. The local properties of traditional partial derivatives are not always suitable for simulating the process of groundwater solute transport, in the sense that they do not consider their spatiotemporal correlation. The non-locality of fractional derivative or fractional order proves more effective in describing the anomalous diffusion phenomenon in this process. Eq. (\ref{RSFDE}) is mainly considered to model the absorbing solute transport \cite{HuangG2006}, where $u$ represents the resident solute concentration and the variable coefficient $e(\bm{x},t)$ is a retardation contribution which is affected by factors such as the heat conduction \cite{Paraiba}. For other real-world applications of Eq. (\ref{RSFDE}), see \cite{ZhangXue2025}.

However, for both integer and fractional orders, the exact solutions of partial differential equations (PDEs) are rarely explicitly known and when it happens often the related representation formulae are not computationally efficient. This property is even stronger when dealing with the more involved fractional PDEs, leading to the emergence of numerous numerical methods over the past decade. Furthermore, while the locality of PDEs leads to a band multilevel structure when a local numerical method is used, the nonlocality of the fractional-order operators implies that dense matrices are encountered even when the approximation method is of local type, as finite differences, finite volumes, finite elements etc.

As for discretization methods in connection with fractional PDEs, we can refer to finite differences (see \cite{Meerschaert2006}), finite elements (see \cite{WangH2013}), finite volumes (see \cite{ZhangX2005}),  basis function configuration methods (see \cite{Sousa2015}). The list could be indeed very long but we stop here since the main focus of the present contribution relies on fast solvers for the related linear systems, by using iterative methods and spectral analysis tools for understanding the convergence features of our preconditioned Krylov subspace solvers.

Fast algorithms for space fractional PDEs are currently popular. A recent study has revealed that the banded preconditioner with diagonal compensation \cite{She2021} can reduce the condition number of the preconditioned matrix. Moreover, Lei and Sun proposed the preconditioned CGNR method with a circulant preconditioner to solve such Toeplitz-like systems, incurring a computational cost of $\mathcal{O}(N_s \log N_s)$, where $N_s$ is the number of total grid nodes \cite{Lei2013}. In addition, for the discretization and fast algorithms of some PDEs with Riesz fractional derivatives, there are other methods for readers to refer to \cite{Breiten2016,Zhu2021}. 

The $\tau$-algebra-based preconditioning strategies have attracted significant attention in both recent and earlier literature, due to the fact that the $\tau$ matrix can be diagonalized by the discrete sine matrix. The definition of the related matrix-algebra and the initial computational results, such as those for accurate eigenvalue solvers, date back to the work of Bini and Capovani \cite{BC-LAA-83} in 1983. Subsequently, $\tau$-preconditioners emerged as a superior alternative for the fast solution of real symmetric Toeplitz linear systems \cite{Bini90,DiBe-SISC-95,DS-NumMath-99,SSC-MC-97,Serra1}. 

Furthermore, the analysis of other techniques, such as band-Toeplitz preconditioning and multigrid methods, originated from the properties and studies of the $\tau$-algebra \cite{Chan-IMA-91,CCS-SISC-98,FS-SISC-96,SSC-MC-97}. For a comprehensive overview of several iterative methods for quickly solving structured linear systems in the late 90s, refer to \cite{CN-SIREV-96,DS-NumMath-99}.

Theoretical studies regarding the rank correction and the role of the generating function have indicated that for ill-conditioned real symmetric Toeplitz systems, the $\tau$-algebra exhibits superior preconditioning properties and numerical performance compared to the circulant preconditioner \cite{SSC-SIMAX-99}.  
This aspect was also investigated in the context of multilevel Toeplitz structures \cite{NSV-TCS-04,SSC-LAA-02} and references therein. The main message is that in general any matrix-algebra with unitary transform fails to achieve clustering and essential spectral equivalence when dealing with high-dimensional problems and this was also confirmed in the circulant setting for fractional-order problems \cite{LeiS2016} recently. However, when the order of the zero of the generating function is not high, the $\tau$-preconditioning can achieve essential spectral equivalence and hence optimality \cite{Noutsos}.

In recent years, a significant contribution to the use of $\tau$-algebra based preconditioners and multigrid method was given in \cite{DMS-JCP-16,DMS-SISC-18}.
Notably, in contrast to banded preconditioners, the $\tau$-preconditioner offers substantial computational advantages and faster iteration speeds in high-dimensional cases. 
Barakitis et al. \cite{BES-NLA-22} and Huang et al. \cite{HuangX2022} investigated the distribution of eigenvalues in the $\tau$-preconditioned matrix. Their work was further extended to multilevel Toeplitz matrices generated by functions with a unique fractional-order zero at zero. Zeng et al. proposed a novel approximate inverse preconditioner for diagonal-plus-Toeplitz coefficient matrices in RSFDE. They employed piecewise interpolation polynomials and $\tau$-matrix approximation in their approach \cite{Zeng2022}.
Lu et al. \cite{Lu2021} put forward a splitting preconditioner founded on sine transform for Toeplitz-like linear systems in one- and two-dimensional RSFDEs. This preconditioner demands a computational complexity of $\mathcal{O}(N_s\log N_s)$ operations at each time level. Li and Hon \cite{Lisimax25} focused on the multilevel Toeplitz systems discretized from RL space fractional diffusion equations. They transformed such multilevel Toeplitz systems into multilevel Hankel systems and thereby proposed a multilevel $\tau$-preconditioner to accelerate the convergence rate of the MINRES method. Meanwhile, they theoretically proved that this convergence rate is mesh-independent.
The authors of the study \cite{HuangLei2023} deliberated on a novel fourth-order difference scheme for two-dimensional RSFDE. They resolved this scheme using the PCG method underpinned by $\tau$-matrix preconditioners.
Regarding other efficient preconditioners, we direct readers to the work in \cite{Aceto2023}. In this reference, the proposed preconditioner is constructed upon a rational approximation of the Riesz fractional operator. Crucially, since the release of the preliminary version of our present work [She et al., \href{https://arxiv.org/abs/2404.10221}{arXiv:2404.10221}], the proposed $\tau$-preconditioning strategy, specifically its numerical range-based analytical framework for the convergence rate of preconditioned GMRES method, has served as a foundational basis for several follow-up works. These include high-order schemes for variable-coefficient RSFDEs \cite{HUANG2026}, finite volume discretizations of conservative space-fractional PDEs \cite{Qu2026x}, and all-at-once linear systems for evolutionary PDEs \cite{Huang2025x,Huang25y}.

It is worth noting that for fractional PDEs with variable coefficients, the discrete linear systems are often asymmetric. As a result, even first-order finite difference schemes, which usually yield well-structured matrices, have not yet been thoroughly analyzed in terms of the convergence rate of preconditioned GMRES methods under mild or no assumptions. 
Recently, Lin et al. \cite{LinXLNG2024} proposed a $\tau$-preconditioned GMRES method for the discrete linear systems arising from first- or second-order finite difference methods of RSFDEs. Under the assumptions that the diffusion coefficient is Lipschitz continuous and the time step is sufficiently small, they also derived an explicit convergence-rate estimate for the preconditioned iteration. Compared with first-order schemes, the high order schemes for fractional operators do not increase the computational cost but greatly improve the accuracy. The reason is that the stencils for high order discretization schemes and low order ones are the same \cite{TZD1}. On this basis, we first propose a class of fourth order quasi-compact schemes for Eq. (\ref{RSFDE}) in this article, as discussed in \cite{Zhang24}. The coefficient matrix of the discretized linear system is composed of a $d$-level Toeplitz matrix and an asymmetric block tridiagonal matrix. Owing to its asymmetry and ill-conditioning, employing the Krylov subspace method to solve this system leads to a slow convergence rate.

One of the contributions of this paper is the introduction of an asymmetric preconditioning method grounded in the fast discrete sine transform. Additionally, an analysis of the GMRES convergence rate is conducted. The analysis reveals that the convergence rate of the proposed preconditioned GMRES (PGMRES) method is independent of the derivative order, spatial dimension, and discretization step size.
To the best of our knowledge, this could potentially be the first PGMRES method for variable-coefficient fractional diffusion equations that has undergone rigorous theoretical analysis, so going beyond the analysis in the pure multilevel Toeplitz setting. Furthermore, our convergence analysis imposes no additional assumptions beyond the basic well-posedness of the problem; in particular, neither the smoothness of the diffusion coefficient nor any restriction on the time-step size is required.

The subsequent sections of this paper are structured as follows. In Section \ref{Section:MD}, a compact numerical scheme for the multi-dimensional problem of Eq. (\ref{RSFDE}) is presented. This scheme encompasses the multi-dimensional discretized linear system, along with a detailed derivation of the right-hand side vector based on the source term $f$. Section \ref{Section:tau} introduces the sine transform-based $\tau$-preconditioner. Additionally, we put forward a two-sided preconditioner for the discretized system. Meanwhile, we prove the convergence rate of the PGMRES method.
In Section \ref{Numer}, a set of carefully selected numerical experiments are conducted. These experiments aim to assess the accuracy of the approximation methods and the performance of the proposed fast algorithm.
The paper ends with Section \ref{sec-end}, which summarizes the main conclusions, followed by \nameref{appA} and \nameref{appB}.

\section{Numerical scheme of multi-dimensional cases}\label{Section:MD}
In the present section, we provide an implicit numerical discretization under uniform grids for solving the RSFDE (\ref{RSFDE}). For positive integers $N$ and $M$, let $\Delta t=T/M$ and $h_i=(R_i-L_i)/(N+1)$. Define the temporal grid and the spatial grid, respectively, by $\{t_m|t_m=m\Delta t,~0\leq m\leq M\}$  and $\{x_{i,j}|x_{i,j}=L_i+jh_i,~1\leq i\leq d,~1\leq j\leq N\}$. For convenience, we define a multi-index $\mathbb{J} = (j_1,j_2,\cdots,j_d)$ and the ranges $\Upsilon = \{j| 1 \leq j \leq N\}$, $\tilde{\Upsilon} = \{j| 0 \leq j \leq N+1\}$, 
\begin{equation}
\mathcal{K}=\Upsilon \times \underbrace{\cdots}_{d-2} \times \Upsilon,\quad \tilde{\mathcal{K}} = \tilde{\Upsilon} \times \underbrace{\cdots}_{d-2} \times \tilde{\Upsilon}.
\notag
\end{equation}
 We apply the Crank-Nicolson (CN) technique to discretize the temporal derivative $\partial_t u(\bm{x},t)$, and apply the fractional centered difference (FCD) formula \cite{Zhang24} to discretize the Riesz space fractional derivative $\partial^{\alpha_i}_{x_i} u(\bm{x},t)$. 

 At the current stage, we extend the FCD formula on the Riesz derivative at the point $\bm{x}_{\mathbb{J}}$ for Eq. (\ref{RSFDE}) and the Crank-Nicolson method at the time point $t_{m+1/2}$:
\begin{equation}
\label{eq2.1}
\begin{aligned}
		 &e_{\mathbb{J}}^{m+1/2} \delta_t u_{\mathbb{J}}^{m+1/2} \approx \frac{1}{2} \sum\limits_{i=1}^{d} \kappa_i \delta^{\alpha_i}_{x_i} \left( u_{\mathbb{J}}^{m+1} + u_{\mathbb{J}}^{m}\right) + f_{\mathbb{J}}^{m+1/2},\ \mathbb{J} \in \mathcal{K},\ 0 \leq m \leq M-1, 
	\end{aligned}
\end{equation}
where $u_{\mathbb{J}}^m$ is the numerical solution of $u\left({\bm x}_{\mathbb{J}}, t_m \right)$, for $\mathbb{J} \in \tilde{\mathcal{K}}$, and $\delta^{\alpha_i}_{x_i}$ is a linear discrete operator of the FCD method, such that 
$$
\left(\partial^{\alpha_i}_{x_i}u\right)\left(\bm{x}_{\mathbb{J}}, t_m\right) \approx \delta^{\alpha_i}_{x_i} u_{\mathbb{J}}^m
=-\frac{1}{h_i^{\alpha_i}}\sum_{j=-\left[\frac{R_i-x_i}{h_i}\right]}^{\left[\frac{x_i-L_i}{h_i}\right]}s_{j}^{(\alpha_i)}
u(x_1,\cdots,x_{i-1},x_i-jh_i,x_{i+1},\cdots,x_d,t_m)
$$
given by \cite{Celik}. The FCD weights of the Riesz fractional derivative of order $\alpha$ are defined as 
$$
s_{j}^{(\alpha)}=\frac{(-1)^j\Gamma(\alpha+1)}{\Gamma(\alpha/2-j+1)\Gamma(\alpha/2+j+1)},~~j=0,\pm 1,\pm 2,\cdots.
$$

With reference to \cite{Sun2016,Zhang24}, we apply the fourth-order compact operator $\mathscr{H}$ in \eqref{eq2.1}, i.e.,
\begin{equation}
\label{eq2.2}
\begin{aligned}
		 &\mathscr{H} e_{\mathbb{J}}^{m+1/2} \delta_t u_{\mathbb{J}}^{m+1/2} = \frac{1}{2} \sum\limits_{i=1}^{d} \kappa_i \mathscr{H}_i \delta^{\alpha_i}_{x_i}\left( u_{\mathbb{J}}^{m+1} + u_{\mathbb{J}}^{m}\right)+\mathscr{H} f_{\mathbb{J}}^{m+1/2},\ \mathbb{J} \in \mathcal{K},\ 0 \leq m \leq M-1. 
	\end{aligned}
\end{equation}
 where 
 \begin{equation*}
 \begin{aligned}
  &  \mathscr{H}_{\alpha_l} u_{\mathbb{J}(l)}^m=\begin{cases}
   \frac{\alpha_l}{24} u_{\mathbb{J}(l)-1}^m + \left(1-\frac{\alpha_l}{12}\right) u_{\mathbb{J}(l)}^m + \frac{\alpha_l}{24} u_{\mathbb{J}(l)+1}^m,& \mathbb{J}(l) \in \Upsilon, \\
   u_{\mathbb{J}(l)}^m, &\mathbb{J}(l) \in \tilde{\Upsilon} \setminus \Upsilon, 
  \end{cases} \\
  & (\text{Here we only focus on the coordinate}\ \mathbb{J}(l)=j_l \ \text{acted by} \ \mathscr{H}_{\alpha_l} \ \cite{Sun2016})
 \end{aligned}
\end{equation*}
 $\mathscr{H}=\prod\limits_{l=1}^d \mathscr{H}_{\alpha_l}$, $\mathscr{H}_i=\prod\limits_{l=1,l \neq i}^d \mathscr{H}_{\alpha_l}$ and $u_{\mathbb{J}}^{m}$ denotes the grid approximation of $U_{\mathbb{J}}^{m}$. Clearly, the initial value is written as $u_{\mathbb{J}}^{0} = \psi(\bm{x}_{\mathbb{J}})$ for all $\mathbb{J} \in \tilde{\mathcal{K}}$. In fact, it is worth noting that two numerical schemes \eqref{eq2.1}--\eqref{eq2.2} are extended from our recent work \cite{Zhang24}, where the stability and convergence analysis of these schemes are given under $d = 1,2$ (the corresponding extensions to $d\geq 3$ follow analogously using techniques from \cite{ZhangXue2025,Zhang24}).

 The uniform spatial grid is denoted as $\mathbb{G} = \{ \bm{x}_{\mathbb{J}} | \mathbb{J} \in \mathcal{K}\}$. Suppose that the grid $\mathbb{G}$ is endowed with lexicographic ordering. Subsequently, we derive the discretization of the considered RSFDE as follows.

\begin{equation}
\label{tls}
\begin{aligned}
A_{\alpha}^{(m)}\mathbf{u}^{(m+1)}&:=
\left(B_{\alpha}E^{(m+1/2)} + S_{\alpha}\right)\mathbf{u}^{(m+1)}\\
&=\left(B_{\alpha}E^{(m+1/2)} - S_{\alpha}\right)\mathbf{u}^{(m)}
+\Delta t \mathbf{F}^{(m+1/2)}\\
&:=\mathbf{b}^{(m)},~~0\leq m\leq M-1,
\end{aligned}
\end{equation} 
where $\mathbf{u}^{(0)}=\psi(\mathbb{G})$, $E^{(k+1/2)}=$diag$(e(\mathbb{G},t_{k+1/2}))$,

\begin{equation}
\label{Salpha}
 S_{\alpha} = \eta_1(B_{\alpha_d}\otimes \cdots \otimes B_{\alpha_2} \otimes S_{\alpha_1}) + \cdots + \eta_d(S_{\alpha_d} \otimes B_{\alpha_{d-1}} \otimes \cdots \otimes B_{\alpha_1})
\end{equation}
with $\eta_i=\frac{\kappa_i \Delta t}{2h_i^{\alpha_i}}$ for $1 \leq i \leq d$. The Kronecker product, denoted by ``$\otimes$'' in \eqref{Salpha}, is an operation on two matrices of arbitrary size that produces a block matrix. It satisfies the following properties:
$$
(A \otimes B)^\top = A^\top \otimes B^\top,~(A \otimes B)(C \otimes D) = (AC) \otimes (BD)
$$
for any matrices $A$, $B$, $C$, and $D$ of suitable sizes. 
The matrix $B_{\alpha}$ is defined by 
$$ B_{\alpha} = B_{\alpha_d} \otimes B_{\alpha_{d-1}} \otimes \cdots \otimes B_{\alpha_1}$$ 
with $B_{\alpha_i}$ being a tridiagonal matrix for each $1 \leq i \leq d$, given by
\begin{equation}
\label{Ha}
B_{\alpha_i}=I_{N} + \frac{\alpha_i}{24}\mathrm{tridiag(1,-2,1)}\in \mathbb{R}^{N\times N}.
\end{equation}
Furthermore, $S_{\alpha_i}$ is a symmetric dense Toeplitz matrix, for each $1 \leq i \leq d$, having the expression
\begin{equation}
\label{eq0}
S_{\alpha_i}=
    \begin{bmatrix}
      s_{0}^{(\alpha_i)} & s_{1}^{(\alpha_i)} & \ldots & s_{N-2}^{(\alpha_i)} & s_{N-1}^{(\alpha_i)} \\
      s_{1}^{(\alpha_i)} & s_{0}^{(\alpha_i)} & s_{1}^{(\alpha_i)} & \ldots & s_{N-2}^{(\alpha_i)}\\
     \vdots & \ddots & \ddots & \ddots& \vdots \\
      s_{N-2}^{(\alpha_i)}& \ddots & \ddots & \ddots & s_{1}^{(\alpha_i)}\\
      s_{N-1}^{(\alpha_i)} & s_{N-2}^{(\alpha_i)}&\ldots & s_{1}^{(\alpha_i)} &s_{0}^{(\alpha_i)}
    \end{bmatrix},
\end{equation}
where
$$
s_0^{(\alpha_i)}=\frac{\Gamma(\alpha_i+1)}{\Gamma^2(\alpha_i/2+1)},~~s_k^{(\alpha_i)}=
\left(1-\frac{\alpha_i+1}{\alpha_i/2+k}\right)s_{k-1}^{(\alpha_i)},\quad k=1,2,\cdots,N-1.
$$
We note that a  matrix $T$ of size $N \times N$ is called Toeplitz matrix, if $[T]_{ij} = t_{i-j}$, $i,j=1,2,\cdots,N$.

It is readily verified that the coefficients $\{s_{k}^{(\alpha_i)}\}_{k\geq 0}$ satisfy
\begin{equation}
 \label{As1}
s_{0}^{(\alpha_i)}>0,~~s_{k}^{(\alpha_i)}< 0,~~ k=1,2,\cdots, ~~\mbox{and} ~~
s_0^{(\alpha_i)}+2\sum_{k=1}^\infty s_{k}^{(\alpha_i)}=0,
\end{equation}
which implies that $S_{\alpha_i}$ is symmetric positive definite. The same conclusion can also be found in \cite{Sun2016}.

The vector $\mathbf{F}^{(m+1/2)}$ depends on the spatial dimension. For clarity, its form is detailed for $d=1,2,3$ in \nameref{appA}. \nameref{appB} further presents the full derivation of scheme \eqref{eq2.2} for the one-dimensional RSFDE \eqref{RSFDE}, together with the resulting matrix–vector formulation. Derivations for higher dimensions follow analogously. For instance, one can refer to the derivation process of the two-dimensional case as described in Ref. \cite{ZhangXue2025}. 

In order to solve the discretized linear system \eqref{tls} efficiently, the iterative method with fast preconditioning strategy is introduced in the next section.

\section{Sine transform based preconditioner}
\label{Section:tau}
Before defining the preconditioner, we first introduce some notations to be used later on. For a symmetric Toeplitz matrix, $T\in\mathbb{R}^{N\times N}$, with $[t_0,t_1,\cdots,t_{N-1}]^\top\in\mathbb{R}^{N}$ being the first column, define its natural $\tau$-matrix approximation as \cite{BC-LAA-83,Bini90,Gu2021,Serra1}
\begin{equation}
\label{tau}
\tau(T)=T-H(T),
\end{equation}
where $H(T)$ is a Hankel matrix defined as follows,
\begin{equation}
H(T)=
    \begin{bmatrix}
      t_2& t_3& \cdots & t_{N-1}&0 &0 \\
      t_3& \adots & \adots  &\adots &\adots &0\\
       \vdots&\adots &\adots &\adots &\adots&t_{N-1}\\
       t_{N-1}&\adots &\adots& \adots & \adots&\vdots\\
      0&\adots&\adots&\adots&\adots&t_3\\
      0&0&t_{N-1}&\cdots&t_3&t_2
    \end{bmatrix}.
\end{equation}
Especially, if the Toeplitz matrix $T$ is tridiagonal, then it is just a $\tau$-matrix.

 The $\tau$-matrix defined in (\ref{tau}) can be diagonalized by the sine transform matrix, i.e.,
 \begin{equation}
 \label{diag}
 \tau(T)=SQS,
 \end{equation}
 where $Q=[\mathrm{diag}(q_i)]_{i=0}^{N-1}$ is a diagonal matrix with
 \begin{equation}
 \label{q}
 q_i=t_0+2\sum\limits_{j=1}^{N-1}t_j\cos \left(\frac{\pi j (i+1)}{N+1}\right),~~i= 0,1,\cdots,N-1,
 \end{equation}
 and $S$ is a sine transform matrix defined by
 $$
 S=\sqrt{\frac{2}{N+1}}\left[\sin\left(\frac{\pi j k}{N+1}\right)\right]_{j,k=1}^{N}.
 $$
 Using the fast sine transform (FST), the implementation of $S\mathbf{r}$ for any given vector $\mathbf{r}$ requires only $\mathcal{O}(N\log N)$ real operations and $\mathcal{O}(N)$ storage. Thus, both the diagonal entries $\{q_i\}_{i=0}^{N-1}$ in (\ref{q}) and the implementation  of $\tau(T)^{-1}\mathbf{r}$ (or $\tau(T)^{-1/2}\mathbf{r}$) only requires $\mathcal{O}(N\log N)$ real operations.
The fact that the required operations are only real represents a further advantage with respect to the celebrated fast Fourier transforms and the use of the circulant algebra.

Evidently, for $1 \leq i \leq d$, the matrix $B_{\alpha_i}$ defined in (\ref{Ha}) is a $\tau$-matrix. Owing to its strict diagonal dominance, this matrix is invertible. Instead of directly solving the linear systems (\ref{tls}), we can solve the following equivalent form.
\begin{equation}
\label{equi}
\begin{aligned}
\tilde{A}_{\alpha}^{(m)}\mathbf{u}^{(m+1)}=\tilde{\mathbf{b}}^{(m)},~~0\leq m\leq M-1,
\end{aligned}
\end{equation}
where
\begin{equation*}
\tilde{A}_{\alpha}^{(m)}=E^{(m+1/2)}  + B_{\alpha}^{-1}S_{\alpha},\quad \tilde{\mathbf{b}}^{(m)}= B_{\alpha}^{-1}\mathbf{b}^{(m)}.
\end{equation*}
Although replacing the coefficient matrices $A_{\alpha}^{(m)}$ with $\tilde{A}_{\alpha}^{(m)}$ does not increase the computational cost (both require $\mathcal{O}(dN^d\log N)$ operations), the structure of the resulting linear system \eqref{equi} is more conducive to analyzing the convergence rate of the preconditioned GMRES method, which motivates us to solve Eq. \eqref{equi} instead.

To improve the computational efficiency, we define the $\tau$-preconditioner $P_{\alpha}$ for matrix $\tilde{A}_{\alpha}^{(m)}$ in \eqref{equi} as follows.
\begin{equation}
\label{Pa}
P_{\alpha}=\bar{e}I_{N^d} + B_{\alpha}^{-1/2}\tau(S_{\alpha})B_{\alpha}^{-1/2},
\end{equation}
where
\begin{equation}\label{tauSalpha}
\tau(S_{\alpha}) = \eta_1(B_{\alpha_d} \otimes \cdots \otimes B_{\alpha_{2}} \otimes \tau(S_{\alpha_1})) + \cdots + \eta_d(\tau(S_{\alpha_d}) \otimes B_{\alpha_{d-1}} \otimes \cdots \otimes B_{\alpha_1})
\end{equation}
and $I_{N^d}$ is the identity matrix of order $N^d \times N^d$, and
 \begin{equation}
 \label{bard}
\bar{e}=\frac{\hat{e}+\check{e}}{2}
 \end{equation}
with
\begin{equation}
 \label{dhat} \hat{e}=\max\limits_{\mathbb{G},m}e(\mathbb{G},t_{m+1/2}),~~\check{e}=\min\limits_{\mathbb{G},m}e(\mathbb{G},t_{m+1/2}).
 \end{equation} 

 An interesting property is that $\tau$-matrices form a commutative matrix-algebra and hence
\begin{equation*}
 B_{\alpha}^{-1/2}\tau(S_{\alpha})B_{\alpha}^{-1/2}= B_{\alpha}^{-1}\tau(S_{\alpha})= \tau(S_{\alpha}) B_{\alpha}^{-1}.
\end{equation*}

This implies that computing the matrix-vector multiplications $B_{\alpha}^{-1/2}\tau(S_{\alpha})B_{\alpha}^{-1/2}\mathbf{r}$ and
$B_{\alpha}^{-1}\tau(S_{\alpha})\mathbf{r}$ demands the same operations.

Before proceeding with the analysis, we first introduce several relevant lemmas.

 \begin{lemma}[\cite{Laub1}]
\label{le:eig}
Assume $A\in \mathbb{R}^{n\times n}$ has eigenvalues $\{\lambda_i\}_{i=1}^n$ and $B\in \mathbb{R}^{m\times m}$ has eigenvalues $\{\mu_j\}_{j=1}^m$. Then, the $mn$ eigenvalues of $A\otimes B$ are
\[\lambda_1\mu_1,\ldots,\lambda_1\mu_m,\lambda_2\mu_1,\ldots,\lambda_2\mu_m,\ldots,\lambda_n\mu_1,\ldots,\lambda_n\mu_m.\]
\end{lemma}
\begin{lemma}[{\cite{HuangX2022}}]
\label{tauT}
 For a symmetric Toeplitz matrix, $T\in\mathbb{R}^{n\times n}$, with $[t_0,t_1,\cdots,t_{n-1}]^\top\in\mathbb{R}^{n}$ being the first column, $\tau(T)$ and $H(T)$ are given as in \eqref{tau}. If
 $$
 t_{0}>0,~~t_{k}< 0,~~ k=1,2,\cdots, n-1, ~~\mbox{and} ~~
t_0+2\sum_{k=1}^{n-1} t_{k} >0,
$$
then $\tau(T)$ is symmetric positive definite, and
$$
\left\|\tau(T)^{-1/2}H(T)\tau(T)^{-1/2}\right\|_2<\frac12,
$$
which implies that
$$
-\frac12\mathbf{v}^{*}\tau(T)\mathbf{v}<\mathbf{v}^{*}H(T)\mathbf{v}<\frac12\mathbf{v}^{*}\tau(T)\mathbf{v},
~~\mbox{for~~any}~~\mathbf{v}\in \mathbb{C}^n, \mathbf{v}\neq 0.
$$
\end{lemma}

The next lemma is about the positive definiteness of $P_{\alpha}$.

\begin{lemma}
\label{PSPD}
The preconditioner $P_{\alpha}$ defined in \eqref{Pa} is symmetric positive definite.
\end{lemma}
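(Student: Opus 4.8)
The plan is to exhibit $P_\alpha$ from \eqref{Pa} as $\bar e I_{N^d}$ plus a symmetric positive semidefinite matrix; since $\bar e>0$, any such sum is symmetric positive definite, which is the assertion. The scalar part is immediate: because $e$ is positive with positive minimum, the quantities in \eqref{dhat} obey $\hat e\ge\check e>0$, hence $\bar e=(\hat e+\check e)/2>0$ by \eqref{bard}, so $\bar e I_{N^d}$ is symmetric positive definite.

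Next I would check that $H_\alpha^{-1/2}$ is a well-defined symmetric positive definite matrix. Each $H_{\alpha_i}$ of \eqref{Ha} is symmetric, has diagonal entries $1-\alpha_i/12>0$, and is strictly diagonally dominant since $\alpha_i/12<1-\alpha_i/12$ when $1<\alpha_i<2$; hence $H_{\alpha_i}$ is symmetric positive definite. (Equivalently, since $H_{\alpha_i}$ is a $\tau$ matrix, its eigenvalues can be read off from \eqref{diag}--\eqref{q} and are bounded below by $2/3$.) Therefore $H_\alpha=H_{\alpha_d}\otimes\cdots\otimes H_{\alpha_1}$, being a Kronecker product of symmetric positive definite matrices, is symmetric positive definite, so its symmetric square root $H_\alpha^{1/2}$ and its inverse $H_\alpha^{-1/2}$ exist and are symmetric positive definite.

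The heart of the argument is the positive definiteness of $\tau(S_\alpha)$. I would first handle the one-level blocks: for fixed $i$, $\tau(S_{\alpha_i})$ is a symmetric $\tau$ matrix, hence diagonalized by $S$ with eigenvalues $q_k^{(i)}$ obtained from \eqref{q} with $t_j=s_j^{(\alpha_i)}$. Using $s_j^{(\alpha_i)}<0$ for $j\ge1$ from \eqref{As1}, every cosine summand satisfies $s_j^{(\alpha_i)}\cos(\cdot)\ge s_j^{(\alpha_i)}$, so $q_k^{(i)}\ge s_0^{(\alpha_i)}+2\sum_{j=1}^{N-1}s_j^{(\alpha_i)}> s_0^{(\alpha_i)}+2\sum_{j=1}^{\infty}s_j^{(\alpha_i)}=0$, where the strict inequality uses the strict negativity of the tail $\sum_{j\ge N}s_j^{(\alpha_i)}$ and the last equality is the normalization in \eqref{As1}. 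Thus $\tau(S_{\alpha_i})$ is symmetric positive definite for each $i$ (positive semidefiniteness would already suffice below). Since the coefficients $\eta_i$ in \eqref{tauSalpha} are positive and every Kronecker factor there is symmetric positive definite (the $H_{\alpha_l}$ by the previous step, the $\tau(S_{\alpha_i})$ by this step), each summand of \eqref{tauSalpha}, and hence $\tau(S_\alpha)$ itself, is symmetric positive definite.

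To conclude I would invoke a congruence: $H_\alpha^{-1/2}\tau(S_\alpha)H_\alpha^{-1/2}$ is symmetric, and for any $\mathbf v\ne0$ one has $\mathbf v^\top H_\alpha^{-1/2}\tau(S_\alpha)H_\alpha^{-1/2}\mathbf v=\mathbf w^\top\tau(S_\alpha)\mathbf w>0$ with $\mathbf w=H_\alpha^{-1/2}\mathbf v\ne0$, so this matrix is symmetric positive definite; adding $\bar e I_{N^d}$ yields, via \eqref{Pa}, that $P_\alpha$ is symmetric positive definite. The only step carrying genuine content is the eigenvalue estimate for $\tau(S_{\alpha_i})$: one must use all three facts in \eqref{As1} — $s_0^{(\alpha_i)}>0$, $s_k^{(\alpha_i)}<0$, and the vanishing total sum — to overcome the loss of positivity introduced by truncating the symbol to finite size $N$. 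Everything else (Kronecker products, matrix square roots, congruence) is routine linear algebra.
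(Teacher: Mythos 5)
Your proof is correct and follows essentially the same route as the paper: establish that $\tau(S_\alpha)$ is symmetric positive definite from \eqref{As1}, pass through the congruence by $H_\alpha^{-1/2}$, and add the positive multiple of the identity. The paper compresses the eigenvalue positivity of the one-level $\tau(S_{\alpha_i})$ blocks into a citation of Lemmas~4.2 and~4.3 of \cite{HuangX2022} (restated here as Lemma~\ref{tauT}), whereas you re-derive that estimate directly from \eqref{q} and \eqref{As1} and make the Kronecker-product step explicit; the substance is the same, your version is just more self-contained.
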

\begin{proof}
From (\ref{As1}) and Lemma \ref{tauT}, it follows that $\tau(S_{\alpha_i})$ is symmetric positive definite. By Lemma \ref{le:eig}, all the eigenvalues of matrix $\tau(S_{\alpha})$ are positive, which means that $\tau(S_{\alpha})$ is symmetric positive definite. Therefore, since congruence preserves positive definiteness, $B_{\alpha}^{-1/2}\tau(S_{\alpha})B_{\alpha}^{-1/2}$ is positive definite, and hence $P_{\alpha}$ is symmetric positive definite.
\end{proof}

By Lemma \ref{PSPD}, $P_{\alpha}^{-1}$ exists; hence so does $P_{\alpha}^{-1/2}$.  Consequently, we can solve the following two-sided preconditioned linear systems.
\begin{equation}
\label{Pls}
P_{\alpha}^{-1/2}\tilde{A}_{\alpha}^{(m)}P_{\alpha}^{-1/2}\tilde{\mathbf{u}}^{(m+1)}
=P_{\alpha}^{-1/2}\tilde{\mathbf{b}}^{(m)},
\end{equation}
where
$$
\tilde{\mathbf{u}}^{(m+1)}=P_{\alpha}^{1/2}\mathbf{u}^{(m+1)}, \quad 0\leq m \leq M-1.
$$
Evidently, $P_{\alpha}$ is a $\tau$-matrix. According to the matrix diagonalization \eqref{diag}, we can infer that each matrix-vector multiplication related to $P_{\alpha}^{-1/2}\tilde{A}_{\alpha}^{(m)}P_{\alpha}^{-1/2}$ necessitates $\mathcal{O}(dN^d\log N)$ real operations. Moreover, once $\tilde{\mathbf{u}}^{(m + 1)}$ is provided, the computation of $\mathbf{u}^{(m+1)}$ also requires $\mathcal{O}(dN^d\log N)$ real operations.

Unlike the conjugate gradient (CG) method, the GMRES method may still require a large number of iterations even if every eigenvalues of the coefficient matrix are equal to 1. This is illustrated by applying the GMRES method to a linear system whose coefficient matrix is tridiagonal matrix, namely $\mathrm{tridiag}(-1,1,0)$. Analyzing the convergence rate of GMRES method solely through the distribution of eigenvalues is insufficient.

To analyze the convergence of the PGMRES method, we need some notations and known results. Denote the numerical range of a matrix $A\in \mathbb{C}^{n\times n}$ by
$$
\mathcal{W}(A)=\left\{\frac{\mathbf{v}^{*}A\mathbf{v}}{\mathbf{v}^{*}\mathbf{v}}\mid
\mathbf{v}\in \mathbb{C}^n,\mathbf{v}\neq 0\right\},
$$
where ``$*$" means the conjugate transpose. Let $\theta\in (0,\pi/2)$ be defined by
\begin{equation}
\label{numrag}
\cos(\theta)=\frac{v(A)}{r(A)},\quad v(A)=\min\limits_{\rho\in \mathcal{W}(A)}|\rho|,\quad
r(A)=\max\limits_{\rho\in \mathcal{W}(A)}|\rho|.
\end{equation}
The following convergence results hold for the GMRES method.
\begin{lemma}[{\cite{LT1,QPS1}}]
\label{bound}
If $A\in \mathbb{C}^{n\times n}$ is such that $0\notin \mathcal{W}(A)$, and $\theta\in (0,\pi/2)$ is given as in \eqref{numrag}, then for $k\geq 1$, we find that
\begin{equation}
\frac{\|\mathbf{r}_k\|_2}{\|\mathbf{r}_0\|_2}\leq (2+\rho_{\theta}){\rho_{\theta}^k},\quad \rho_{\theta}=2\sin\left(\frac{\theta}{4-2\theta/\pi}\right)<\sin\theta,
\end{equation}
where $\mathbf{r}_k$ is the $k$th residual of the GMRES method for solving the linear system $A\mathbf{u}=\mathbf{b}$.
\end{lemma}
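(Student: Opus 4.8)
This is a standard convergence estimate for GMRES in terms of the numerical range, and the plan is to prove it, following \cite{LT1,QPS1}, by separating a matrix ingredient from a scalar approximation-theory one. First, by the minimal-residual (optimality) property of GMRES, $\|\mathbf r_k\|_2/\|\mathbf r_0\|_2\le\|p(A)\|_2$ for \emph{every} polynomial $p$ of degree at most $k$ with $p(0)=1$, so it suffices to exhibit one such $p$ with $\|p(A)\|_2\le(2+\rho_\theta)\rho_\theta^k$. Second, since $\mathcal W(A)$ is compact and convex with $0\notin\mathcal W(A)$, it is strictly separated from the origin by a line; combined with $v(A)=\min_{\rho\in\mathcal W(A)}|\rho|$, $r(A)=\max_{\rho\in\mathcal W(A)}|\rho|$ and the definition \eqref{numrag} of $\theta$, this forces $\mathcal W(A)$ --- after a unitary rotation, which leaves $\|p(A)\|_2$ unchanged --- into the truncated circular sector $\Sigma_\theta=\{z:|\arg z|\le\theta,\ v(A)\le|z|\le r(A)\}$. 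By the homogeneity $p(z)\mapsto p(r(A)z)$, the resulting extremal problem on $\Sigma_\theta$ depends only on $\theta$ and $k$, so one may rescale to $r(A)=1$.

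The scalar problem is then to minimise $\max_{z\in\Sigma_\theta}|p(z)|$ over polynomials $p$ of degree $\le k$ with $p(0)=1$, and I would attack it by conformal mapping. Let $\Phi$ map $\widehat{\mathbb C}\setminus\Sigma_\theta$ onto the exterior of the unit disk with $\Phi(\infty)=\infty$; the associated Faber polynomials behave like $\Phi^k$ on $\Sigma_\theta$, the normalisation $p(0)=1$ costs a factor $\approx|\Phi(0)|^{-k}$, and hence $\max_{\Sigma_\theta}|p|$ can be taken $\le\rho_\theta^k$ with $\rho_\theta=|\Phi(0)|^{-1}$. Computing $\Phi$ explicitly --- the sector is opened to a half-plane by the power map $z\mapsto z^{\pi/(2\theta)}$ and then sent to the disk by a M\"obius transformation --- gives $|\Phi(0)|^{-1}=2\sin\!\big(\theta/(4-2\theta/\pi)\big)$, the exponent $\pi/(2\theta)$ being the source of the factor $4-2\theta/\pi=2(2-\theta/\pi)$. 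Finally a Crouzeix-type field-of-values inequality $\|p(A)\|_2\le C\max_{z\in\mathcal W(A)}|p(z)|$ with an absolute constant promotes the scalar bound on $\Sigma_\theta$ to a bound on $\|p(A)\|_2$; tracking $C$ together with the prefactor of the Faber expansion collapses the constant to $2+\rho_\theta$, and the elementary comparison $\rho_\theta<\sin\theta$ for $\theta\in(0,\pi/2)$ finishes the statement.

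The localisation of $\mathcal W(A)$ and the GMRES optimality step are routine; the \emph{sharp constant} is the obstacle. A crude version is immediate: enclosing $\Sigma_\theta$ (with $r(A)=1$) in the smallest disk $D(c,R)$ that avoids $0$ gives $\min R/|c|=\sin\theta$, attained at $c=1/\cos\theta$, $R=\tan\theta$, and then $p(z)=(1-z/c)^k$ together with the numerical-radius power inequality $w(B^k)\le w(B)^k$ and $\|B\|_2\le 2w(B)$ already yields $\|\mathbf r_k\|_2/\|\mathbf r_0\|_2\le 2\sin^k\theta$. Replacing $\sin\theta$ by the strictly smaller $\rho_\theta$ forces one to use the sector itself rather than an enclosing disk, hence to control a genuine conformal map and the remainder of its Faber series, and to do this while keeping the prefactor no larger than $2+\rho_\theta$; that simultaneous bookkeeping is the delicate estimate carried out in \cite{LT1,QPS1}, which is why we simply invoke it here.
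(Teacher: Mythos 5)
The paper does not prove Lemma~\ref{bound}; it states it as a known result with a citation to \cite{LT1,QPS1} and simply invokes it in the proof of Theorem~\ref{GMRES}, so there is no internal argument to compare against. Your sketch correctly identifies the standard route for such field-of-values GMRES bounds: reduce via GMRES optimality to $\min_{p(0)=1,\,\deg p\le k}\|p(A)\|_2$, localise $\mathcal W(A)$ (after rescaling) in a truncated sector via the supporting line at the point of the convex set $\mathcal W(A)$ nearest the origin together with $|z|\le r(A)$, solve the resulting scalar extremal problem by Faber polynomials and an explicit conformal map of the sector exterior, and convert back to a matrix bound with a Crouzeix-type inequality; and you are appropriately upfront that the sharp constant $(2+\rho_\theta)\rho_\theta^k$ is the delicate bookkeeping you defer to the cited sources, exactly as the paper does. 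One small terminological slip: the normalisation is a \emph{scalar} rotation $A\mapsto e^{i\beta}A$ (which rotates $\mathcal W(A)$ and is absorbed by rotating the polynomial variable), not a unitary similarity $A\mapsto UAU^{*}$, which would leave $\mathcal W(A)$ unchanged.
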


\begin{lemma}
\label{Heig}
Let $B_{\alpha_i}$ be defined by \eqref{Ha}, then
$$
\mathrm{cond}(B_{\alpha_i}^{1/2})=\left\|B_{\alpha_i}^{-1/2}\right\|_2\left\|B_{\alpha_i}^{1/2}\right\|_2 \leq \frac{\sqrt{6}}{2},
$$
for $1 \leq i \leq d$.
\end{lemma}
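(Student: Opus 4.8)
The plan is to diagonalize $H_{\alpha_i}$ explicitly. Recall that the second-difference matrix $\mathrm{tridiag}(1,-2,1)\in\mathbb{R}^{N\times N}$ has the well-known eigenvalues $-4\sin^2\!\left(\frac{j\pi}{2(N+1)}\right)$, $j=1,\dots,N$, with the sine transform matrix $S$ as its (orthogonal) eigenvector matrix. Hence, from \eqref{Ha}, the eigenvalues of $H_{\alpha_i}$ are
$$
\lambda_j(H_{\alpha_i}) = 1 - \frac{\alpha_i}{6}\sin^2\!\left(\frac{j\pi}{2(N+1)}\right),\qquad j = 1,\dots,N ,
$$
with $\lambda_{\max}(H_{\alpha_i})=\lambda_1$ and $\lambda_{\min}(H_{\alpha_i})=\lambda_N$. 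First I would observe that since $\sin^2(\cdot)\in(0,1)$ for these arguments and $1<\alpha_i<2$, every eigenvalue obeys
$$
\frac{2}{3} < 1 - \frac{\alpha_i}{6} < \lambda_j(H_{\alpha_i}) < 1 ;
$$
in particular $H_{\alpha_i}$ is symmetric positive definite, so $H_{\alpha_i}^{1/2}$ and $H_{\alpha_i}^{-1/2}$ are well defined and symmetric.

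Next, because $H_{\alpha_i}$ (and therefore $H_{\alpha_i}^{\pm 1/2}$) is symmetric, its spectral norm equals its spectral radius, so
$$
\left\|H_{\alpha_i}^{1/2}\right\|_2 = \sqrt{\lambda_{\max}(H_{\alpha_i})} < 1 ,\qquad
\left\|H_{\alpha_i}^{-1/2}\right\|_2 = \frac{1}{\sqrt{\lambda_{\min}(H_{\alpha_i})}} < \frac{1}{\sqrt{2/3}} .
$$
Multiplying these two bounds yields
$$
\mathrm{cond}\!\left(H_{\alpha_i}^{1/2}\right) = \sqrt{\frac{\lambda_{\max}(H_{\alpha_i})}{\lambda_{\min}(H_{\alpha_i})}} < \sqrt{\frac{3}{2}} = \frac{\sqrt{6}}{2} ,
$$
which is precisely the claimed estimate (in fact with a strict sign for every finite $N$).

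There is essentially no obstacle here; the only care needed is to cite the correct closed-form spectrum of $\mathrm{tridiag}(1,-2,1)$ and to track the inequality directions so that the worst case $\alpha_i\to 2^-$ (together with $N\to\infty$, which pushes $\lambda_{\min}$ toward $1-\alpha_i/6$) is covered — and it is, because $\alpha_i<2$ is strict. If one prefers not to invoke the exact eigenvalues, the same conclusion follows from Gershgorin's theorem applied to $H_{\alpha_i}$, whose disks are centered at $1-\alpha_i/12$ with radius at most $\alpha_i/12$, again confining the spectrum to $[1-\alpha_i/6,\,1]\subset(2/3,1)$ and giving $\mathrm{cond}(H_{\alpha_i}^{1/2})\le \sqrt{1/(1-\alpha_i/6)}<\sqrt{6}/2$.
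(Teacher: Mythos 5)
Your proof is correct and follows essentially the same route as the paper: both explicitly diagonalize $H_{\alpha_i}$ via the known eigenvalues of $\mathrm{tridiag}(1,-2,1)$, bound the spectrum of $H_{\alpha_i}$ strictly between $2/3$ and $1$ using $1<\alpha_i<2$, and conclude $\mathrm{cond}(H_{\alpha_i}^{1/2}) < \sqrt{3/2} = \sqrt{6}/2$. The closing Gershgorin remark is a nice optional alternative but not needed.
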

\begin{proof}
From \cite{Leveque1}, we have that the eigenvalues of the matrix $B=\mathrm{tridiag}(1,-2,1) \in \mathbb{R}^{N\times N}$ in decreasing order are

$$\lambda_{k}(B)=-4\sin^2\left(\frac{k\pi}{2N+2}\right),\quad k=1,2,\cdots,N.$$

Obviously,
$$
B_{\alpha_i}=I_N+\frac{\alpha_i}{24}B,\quad 1 \leq i \leq d.
$$
Then, the eigenvalues of $B_{\alpha_i}$ are given by
$$\frac23 <1-\frac{\alpha_i}{6}< \lambda_{k}(B_{\alpha_i})=1+\frac{\alpha_i}{24}\lambda_{k}(B)< 1.$$
It implies that  $\frac{\sqrt{6}}{3}<\lambda_{k}(B_{\alpha_i}^{1/2})<1$, which completes the proof.
\end{proof}

\begin{lemma}
\label{rate}
For the positive numbers ~$a_i$,~$b_i$,~$1\leq\ i\leq m$, we have
\begin{eqnarray*}
    \min\limits_{1\leq i\leq m} {\dfrac{a_i}{b_i}}  \leq   \left(\sum\limits_{i=1}^m a_i\right) /\left(\sum\limits_{i=1}^m b_i\right)    \leq  \max\limits_{1\leq i\leq m} {\dfrac{a_i}{b_i}}.
\end{eqnarray*}
\end{lemma}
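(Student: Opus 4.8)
The plan is to reduce both inequalities to a single termwise comparison and then sum. First I would introduce shorthand for the two quantities on the outside: set $\mu = \min_{1\le i\le m} a_i/b_i$ and $M = \max_{1\le i\le m} a_i/b_i$. By the very definition of minimum and maximum over the finite index set, for every $i \in \{1,\dots,m\}$ one has $\mu \le a_i/b_i \le M$. Since each $b_i$ is positive, multiplying these inequalities through by $b_i$ does not reverse them, giving $\mu b_i \le a_i \le M b_i$ for each $i$.

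Next I would add up these $m$ chains of inequalities over $i=1,\dots,m$, which produces $\mu \sum_{i=1}^m b_i \le \sum_{i=1}^m a_i \le M \sum_{i=1}^m b_i$. Because every $b_i$ is strictly positive, the sum $\sum_{i=1}^m b_i$ is strictly positive, so I may divide all three members of the chain by it while preserving the direction of the inequalities. This yields $\mu \le \bigl(\sum_{i=1}^m a_i\bigr)/\bigl(\sum_{i=1}^m b_i\bigr) \le M$, which is precisely the asserted double inequality.

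There is no real obstacle here; the one point that deserves a line of care is that the hypothesis $b_i > 0$ is used twice and essentially — once to justify multiplying the termwise bounds by $b_i$ without flipping them, and once to justify the final division by $\sum_i b_i$ — so no assumption is superfluous. An alternative route would be induction on $m$, taking as base case the two-term mediant inequality $\min(a_1/b_1,\, a_2/b_2) \le (a_1+a_2)/(b_1+b_2) \le \max(a_1/b_1,\, a_2/b_2)$ and then grouping $\sum_{i=1}^{m} a_i = \bigl(\sum_{i=1}^{m-1} a_i\bigr) + a_m$, but the direct summation argument above is shorter and is the version I would write up.
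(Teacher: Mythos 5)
Your proof is correct and complete; the paper states this lemma without proof (it is the standard mediant inequality), so there is nothing to compare against. Your direct argument — multiply the termwise bounds $\mu \le a_i/b_i \le M$ by $b_i>0$, sum over $i$, and divide by $\sum_i b_i>0$ — is the canonical justification and uses the positivity hypothesis exactly where needed.
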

In the following, we utilize Theorem \ref{GMRES} to show the convergence rate of the GMRES method for the preconditioned linear systems \eqref{Pls}.

\begin{theorem}
\label{GMRES}
For any given $m\in \mathbb{N}$, $0\leq m\leq M$, the GMRES method for the preconditioned linear systems \eqref{Pls} has a convergence rate independent of grid size $N^d$, i.e., the residuals generated by GMRES solver satisfy
$$
\frac{\|\mathbf{r}_k\|_{2}}{\|\mathbf{r}_0\|_2}\leq (2+c){c^k},
$$
where
\begin{align}
c=&\max\left\{\sqrt{1-\frac{\check{e}^2}{\hat{e}^2}},~\sqrt{\frac{16\sqrt{6}}{(4+\sqrt{6})^2}},~
\sqrt{1-\frac{(\hat{e}+\check{e})^2(11-4\sqrt{6})}{32\hat{e}^2}},~
\sqrt{1-\frac{32\check{e}^2}{(11+4\sqrt{6})(\hat{e}+\check{e})^2}}
\right\}, \notag
\end{align} $\mathbf{r}_k=P_{\alpha}^{-1/2}\tilde{\mathbf{b}}^{(m)}-P_{\alpha}^{-1/2}\tilde{A}_{\alpha}^{(m)}P_{\alpha}^{-1/2}\tilde{\mathbf{u}}^{(k)}_{*}$, $\tilde{\mathbf{u}}^{(k)}_{*}$ denotes the $k$-th iteration by GMRES method, $\tilde{\mathbf{u}}^{(0)}_{*}\in \mathbb{R}^{N^d}$ denotes an arbitrary initial guess, and $\hat{e}$ and $\check{e}$ are defined by \eqref{dhat}.

\end{theorem}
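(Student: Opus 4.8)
The plan is to apply the field‑of‑values estimate of Lemma~\ref{bound} to $A:=P_\alpha^{-1/2}\tilde A_\alpha^{(m)}P_\alpha^{-1/2}$, so that everything reduces to producing a lower bound for $\cos\theta=v(A)/r(A)$ in \eqref{numrag} that is free of $N$ (and of $m$, $\alpha$, $d$). Decompose the dense Toeplitz block as $S_\alpha=\tau(S_\alpha)+\mathcal{H}_\alpha$, where $\mathcal{H}_\alpha:=\eta_1\bigl(H_{\alpha_d}\otimes\cdots\otimes H(S_{\alpha_1})\bigr)+\cdots+\eta_d\bigl(H(S_{\alpha_d})\otimes\cdots\otimes H_{\alpha_1}\bigr)$ collects the Hankel corrections (cf.\ \eqref{tau}, \eqref{tauSalpha}), and set $\widehat S_\alpha:=H_\alpha^{-1}\tau(S_\alpha)$, which is symmetric positive definite and, being again a multilevel $\tau$ matrix, commutes with $H_\alpha$ and with $P_\alpha$. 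Since $\tilde A_\alpha^{(m)}=E^{(m+1/2)}+H_\alpha^{-1}\tau(S_\alpha)+H_\alpha^{-1}\mathcal{H}_\alpha$ and $\widehat S_\alpha P_\alpha^{-1}=I_{N^d}-\bar e P_\alpha^{-1}$ by \eqref{Pa}, I would write
\[
A=\underbrace{P_\alpha^{-1/2}\bigl(E^{(m+1/2)}+\widehat S_\alpha\bigr)P_\alpha^{-1/2}}_{=:A_1}\;+\;\underbrace{P_\alpha^{-1/2}H_\alpha^{-1}\mathcal{H}_\alpha P_\alpha^{-1/2}}_{=:A_2},
\]
where $A_1$ is symmetric positive definite and $A_2$ carries all of the non‑normality.

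For $A_1$, fix a unit vector $\mathbf{x}$ and put $t:=\mathbf{x}^{*}P_\alpha^{-1}\mathbf{x}$; because $P_\alpha\succeq\bar e I_{N^d}$ (from \eqref{Pa}, as $\widehat S_\alpha\succ0$) one has $t\in(0,1/\bar e)$. Using $\check e I\preceq E^{(m+1/2)}\preceq\hat e I$ together with $\mathbf{x}^{*}\widehat S_\alpha P_\alpha^{-1}\mathbf{x}=1-\bar e t$ gives $\mathbf{x}^{*}A_1\mathbf{x}\in\bigl[\,1-\tfrac{\hat e-\check e}{2}t,\ 1+\tfrac{\hat e-\check e}{2}t\,\bigr]$; moreover Lemma~\ref{rate} applied to $\tfrac{\check e+s}{\bar e+s}$ and $\tfrac{\hat e+s}{\bar e+s}$ (with $s$ ranging over the spectrum of $\widehat S_\alpha$) yields the clean enclosure $\tfrac{2\check e}{\hat e+\check e}I_{N^d}\preceq A_1\preceq\tfrac{2\hat e}{\hat e+\check e}I_{N^d}$.

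The heart of the argument, and the step I expect to be the main obstacle, is the estimate
\[
\bigl|\mathbf{x}^{*}A_2\mathbf{x}\bigr|\le\frac{\sqrt6}{4}\,\mathbf{x}^{*}\bigl(I_{N^d}-\bar e P_\alpha^{-1}\bigr)\mathbf{x}=\frac{\sqrt6}{4}(1-\bar e t)<\frac{\sqrt6}{4}.
\]
The difficulty is that $H_{\alpha_i}$ does not commute with $H(S_{\alpha_i})$, so $H_\alpha^{-1}\mathcal{H}_\alpha$ is not symmetric, and a naive symmetrization introduces $\mathrm{cond}(H_\alpha^{1/2})\le(\sqrt6/2)^d$, which would destroy dimension‑independence. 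I would instead estimate termwise: writing $H_\alpha^{-1}\mathcal{H}_\alpha=\sum_{i=1}^{d}\eta_i\bigl(I\otimes\cdots\otimes H_{\alpha_i}^{-1}H(S_{\alpha_i})\otimes\cdots\otimes I\bigr)$, for each $i$ factor $H(S_{\alpha_i})=\tau(S_{\alpha_i})^{1/2}R_i\tau(S_{\alpha_i})^{1/2}$ with $\|R_i\|_2<\tfrac12$ by Lemma~\ref{tauT}, and then bound the resulting (complex) quadratic form by Cauchy--Schwarz, using that $\tau(S_{\alpha_i})$ commutes with $H_{\alpha_i}$, through $\|H_{\alpha_i}^{-1}\|_2^{1/2}\|H_{\alpha_i}\|_2^{1/2}=\mathrm{cond}(H_{\alpha_i})^{1/2}\le\sqrt6/2$ (Lemma~\ref{Heig}) times the $i$‑th summand of $\widehat S_\alpha$. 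The point is that each $H(S_{\alpha_i})$ is paired only with its own $H_{\alpha_i}$, so the factor $\sqrt6/2$ is never raised to the $d$‑th power; summing over $i$ with the positive weights $\eta_i$ (Lemma~\ref{rate}) gives $|\mathbf{z}^{*}H_\alpha^{-1}\mathcal{H}_\alpha\mathbf{z}|\le\tfrac{\sqrt6}{4}\,\mathbf{z}^{*}\widehat S_\alpha\mathbf{z}$ for every $\mathbf{z}$, and the substitution $\mathbf{z}=P_\alpha^{-1/2}\mathbf{x}$ with $P_\alpha^{-1/2}\widehat S_\alpha P_\alpha^{-1/2}=I_{N^d}-\bar e P_\alpha^{-1}$ produces the displayed bound.

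Finally, since $\mathbf{x}^{*}A_1\mathbf{x}$ is real and positive while $\mathbf{x}^{*}A_2\mathbf{x}$ lies in the disc of radius $\tfrac{\sqrt6}{4}(1-\bar e t)$ about the origin, $\mathbf{x}^{*}A\mathbf{x}$ lies in a disc centred at the positive number $\mathbf{x}^{*}A_1\mathbf{x}$; hence $|\mathbf{x}^{*}A\mathbf{x}|$ is trapped between the two affine‑in‑$t$ functions $L(t):=1-\tfrac{\hat e-\check e}{2}t-\tfrac{\sqrt6}{4}(1-\bar e t)$ and $U(t):=1+\tfrac{\hat e-\check e}{2}t+\tfrac{\sqrt6}{4}(1-\bar e t)$. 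As $t$ stays in a subinterval of $(0,1/\bar e)$ and $L,U$ are affine, $v(A)\ge\min\{L(0),L(1/\bar e)\}=\min\{1-\tfrac{\sqrt6}{4},\tfrac{2\check e}{\hat e+\check e}\}>0$ and $r(A)\le\max\{U(0),U(1/\bar e)\}=\max\{1+\tfrac{\sqrt6}{4},\tfrac{2\hat e}{\hat e+\check e}\}$, so in particular $0\notin\mathcal{W}(A)$. The four possibilities for which term attains the minimum in the numerator and which attains the maximum in the denominator yield, through $\sin\theta=\sqrt{1-\cos^2\theta}$ with $\cos\theta\ge v(A)/r(A)$, precisely the four quantities under the maximum defining $c$ (for example, numerator $1-\tfrac{\sqrt6}{4}$ and denominator $1+\tfrac{\sqrt6}{4}$ give $\cos\theta\ge\tfrac{4-\sqrt6}{4+\sqrt6}$, i.e.\ the term $\sqrt{16\sqrt6/(4+\sqrt6)^2}$). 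Therefore $\rho_\theta<\sin\theta\le c$, and Lemma~\ref{bound} together with the monotonicity of $x\mapsto(2+x)x^k$ on $(0,1)$ gives $\|\mathbf{r}_k\|_2/\|\mathbf{r}_0\|_2\le(2+c)c^k$, with $c$ independent of $N$, $m$, $\alpha$, and $d$.
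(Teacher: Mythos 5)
Your proposal is correct and follows essentially the same route as the paper: the same decomposition $\tilde{A}_{\alpha}^{(m)}-P_{\alpha}=(E^{(m+1/2)}-\bar{e}I_{N^d})+H_{\alpha}^{-1}H(S_{\alpha})$, the same termwise Hankel estimate yielding the factor $\sqrt{6}/4$ from Lemma~\ref{tauT}, Lemma~\ref{Heig}, and commutativity within the $\tau$-algebra, and the same field-of-values Elman-type bound from Lemma~\ref{bound}. Your parametrization by $t=\mathbf{x}^{*}P_{\alpha}^{-1}\mathbf{x}$ with evaluation of the affine bounds at the endpoints of $(0,1/\bar{e})$ is simply a reformulation of the extremal-ratio argument the paper obtains directly from Lemma~\ref{rate}, and leads to the identical four candidate values of $c$.
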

\begin{proof}
Denote $\breve{A}_{\alpha}^{(m)}=P_{\alpha}^{-1/2}\tilde{A}_{\alpha}^{(m)}P_{\alpha}^{-1/2}$.
According to \eqref{numrag}, we need to investigate the numerical radius $r(\breve{A}_{\alpha})$ and the distance of $\mathcal{W}(\breve{A}_{\alpha})$ from the origin. For any $\mathbf{y}\in \mathbb{C}^{N^d}$ and $\mathbf{y}\neq 0$, we deduce the following explicit expression
\begin{equation}
\label{Theq1}
\begin{aligned}
\frac{\mathbf{y}^{*}\breve{A}_{\alpha}^{(m)}\mathbf{y}}{\mathbf{y}^{*}\mathbf{y}}
&=1+\frac{\mathbf{y}^{*}P_{\alpha}^{-1/2}\left(E^{(m+1/2)}-\bar{e}I_{N^d} + B_{\alpha}^{-1}H(S_{\alpha})\right)P_{\alpha}^{-1/2}\mathbf{y}}{\mathbf{y}^{*}\mathbf{y}}\\
&=1+\frac{\mathbf{v}^{*}\left(E^{(m+1/2)}-\bar{e}I_{N^d} \right)\mathbf{v}+\mathbf{v}^{*}B_{\alpha}^{-1}H(S_{\alpha})\mathbf{v}}
{\mathbf{v}^{*}P_{\alpha}\mathbf{v}}\\
&=1+\frac{\mathbf{v}^{*}\left(E^{(m+1/2)}-\bar{e}I_{N^d} \right)\mathbf{v}+\mathbf{v}^{*}B_{\alpha}^{-1}H(S_{\alpha})\mathbf{v}}
{\bar{e}\mathbf{v}^{*}\mathbf{v} + \mathbf{v}^{*} B_{\alpha}^{-1}\tau(S_{\alpha})\mathbf{v}}\\
&=1+\frac{\mathbf{v}^{*}\left(E^{(m+1/2)}-\bar{e}I_{N^d} \right)\mathbf{v}+\eta_1\mathbf{v}^{*}(I_{N} \otimes \cdots \otimes I_{N} \otimes B_{\alpha_1}^{-1} H(S_{\alpha_1}))\mathbf{v}}
{\bar{e}\mathbf{v}^{*}\mathbf{v} + \eta_1\mathbf{v}^{*}(I_{N} \otimes \cdots \otimes I_{N} \otimes B_{\alpha_1}^{-1} \tau(S_{\alpha_1}))\mathbf{v}}\\
&\ \frac{+ \cdots + \eta_d \mathbf{v}^{*}( B_{\alpha_d}^{-1} H(S_{\alpha_d}) \otimes \cdots \otimes I_{N} \otimes I_{N})\mathbf{v}}
{+ \cdots + \eta_d \mathbf{v}^{*}( B_{\alpha_d}^{-1} \tau(S_{\alpha_d}) \otimes \cdots \otimes I_{N} \otimes I_{N})\mathbf{v}},
\end{aligned}
\end{equation}
where $\mathbf{v}=P_{\alpha}^{-1/2}\mathbf{y}$. According to Lemmas \ref{PSPD}, it follows that
\begin{equation}
\label{adpostive}
\mathbf{v}^{*}P_{\alpha}\mathbf{v}>0,~\text{and}~\mathbf{v}^{*} B_{\alpha}^{-1}\tau(S_{\alpha})\mathbf{v}>0.
\end{equation}
By Lemma \ref{rate} and inequations \eqref{adpostive}, it holds 
\begin{equation}
\begin{aligned}
&\left|\frac{\mathbf{v}^{*}\left(E^{(m+1/2)}-\bar{e}I_{N^d} \right)\mathbf{v}+\mathbf{v}^{*}B_{\alpha}^{-1}H(S_{\alpha})\mathbf{v}}
{\bar{e}\mathbf{v}^{*}\mathbf{v} + \mathbf{v}^{*} B_{\alpha}^{-1}\tau(S_{\alpha})\mathbf{v}}\right|\\
\leq &
\frac{\left|\mathbf{v}^{*}\left(E^{(m+1/2)}-\bar{e}I_{N^d} \right)\mathbf{v}\right|+\left|\mathbf{v}^{*}B_{\alpha}^{-1}H(S_{\alpha})\mathbf{v}\right|}
{\bar{e}\mathbf{v}^{*}\mathbf{v} + \mathbf{v}^{*} B_{\alpha}^{-1}\tau(S_{\alpha})\mathbf{v}}\\
%
%
\leq &
\max\left\{\frac{\left|\mathbf{v}^{*}\left(E^{(m+1/2)}-\bar{e}I_{N^d} \right)\mathbf{v}\right|}{\bar{e}\mathbf{v}^{*}\mathbf{v}},
\frac{\left|\mathbf{v}^{*}B_{\alpha}^{-1}H(S_{\alpha})\mathbf{v}\right|}{\mathbf{v}^{*} B_{\alpha}^{-1}\tau(S_{\alpha})\mathbf{v}}\right\}\\
\leq&
\max\left\{\frac{\left|\mathbf{v}^{*}\left(E^{(m+1/2)}-\bar{e}I_{N^d} \right)\mathbf{v}\right|}{\bar{e}\mathbf{v}^{*}\mathbf{v}},
\frac{\left|\mathbf{v}^{*}(I_{N} \otimes \cdots \otimes I_{N} \otimes B_{\alpha_1}^{-1} H(S_{\alpha_1}))\mathbf{v}\right|}{\mathbf{v}^{*}(I_{N} \otimes \cdots \otimes I_{N} \otimes B_{\alpha_1}^{-1} \tau(S_{\alpha_1}))\mathbf{v}},\right.\\
&\frac{\left|\mathbf{v}^{*}(I_{N} \otimes \cdots \otimes I_{N} \otimes B_{\alpha_2}^{-1} H(S_{\alpha_2})\otimes I_{N})\mathbf{v}\right|}{\mathbf{v}^{*}(I_{N} \otimes \cdots \otimes I_{N} \otimes B_{\alpha_2}^{-1} \tau(S_{\alpha_2})\otimes I_{N}) \mathbf{v}},\cdots,\left.\frac{\left|\mathbf{v}^{*}( B_{\alpha_d}^{-1} H(S_{\alpha_d}) \otimes \cdots \otimes I_{N} \otimes I_{N})\mathbf{v}\right|}{\mathbf{v}^{*}( B_{\alpha_d}^{-1} \tau(S_{\alpha_d}) \otimes \cdots \otimes I_{N} \otimes I_{N})\mathbf{v}}\right\}.\\
\end{aligned}
\end{equation}
Obviously, 
\begin{equation}
\label{Theq2}
\frac{\left|\mathbf{v}^{*}\left(E^{(m+1/2)}-\bar{e}I_{N^d}\right)\mathbf{v}\right|}{\bar{e}\mathbf{v}^{*}\mathbf{v}}
\leq \frac{\hat{e}-\check{e}}{\hat{e}+\check{e}}<1,
\end{equation}
where $\hat{e}$ and $\check{e}$ are defined by \eqref{dhat}. Next, we analyze the upper bound of the term $\frac{\left| \mathbf{v}^{*}(I_{N} \otimes \cdots \otimes I_{N} \otimes B_{\alpha_1}^{-1} H(S_{\alpha_1}))\mathbf{v} \right|}{\mathbf{v}^{*}(I_{N} \otimes \cdots \otimes I_{N} \otimes B_{\alpha_1}^{-1} \tau(S_{\alpha_1}))\mathbf{v}}$. The upper bounds of the remaining terms coincide with this one, and their derivations are analogous; hence we shall omit them in the following. By \cite[{Theorem 1.2}]{GT1} and the commutativity of $\tau$ matrices,  we get
\begin{equation}
\label{addbound}
\begin{aligned}
&\frac{\left| \mathbf{v}^{*}(I_{N} \otimes \cdots \otimes I_{N} \otimes B_{\alpha_1}^{-1} H(S_{\alpha_1}))\mathbf{v} \right|}{\mathbf{v}^{*}(I_{N} \otimes \cdots \otimes I_{N} \otimes B_{\alpha_1}^{-1} \tau(S_{\alpha_1}))\mathbf{v}}\\
=&\frac{\left| \mathbf{v}^{*}(I_{N} \otimes \cdots \otimes I_{N} \otimes B_{\alpha_1}^{-1} H(S_{\alpha_1}))\mathbf{v} \right|}{\mathbf{v}^{*}(I_{N} \otimes \cdots \otimes I_{N} \otimes B_{\alpha_1}^{-1/2} \tau(S_{\alpha_1})B_{\alpha_1}^{-1/2})\mathbf{v}}\\
=& \frac{\left|\mathbf{z}^{*} (I_{N} \otimes \cdots \otimes \tau(S_{\alpha_1})^{-1/2} B_{\alpha_1}^{-1/2} H(S_{\alpha_1}) B_{\alpha_1}^{1/2} \tau(S_{\alpha_1})^{-1/2} )\mathbf{z}\right|}{\mathbf{z}^{*}\mathbf{z}} \\
\leq& \left\|\tau(S_{\alpha_1})^{-1/2}B_{\alpha_1}^{-1/2}H(S_{\alpha_1})B_{\alpha_1}^{1/2}\tau(S_{\alpha_1})^{-1/2}\right\|_2\\
=&\left\|B_{\alpha_1}^{-1/2}\tau(S_{\alpha_1})^{-1/2}H(S_{\alpha_1})\tau(S_{\alpha_1})^{-1/2}B_{\alpha_1}^{1/2}\right\|_2\\
\leq& \mathrm{cond}(B_{\alpha_1}^{1/2})\left\|\tau(S_{\alpha_1})^{-1/2}H(S_{\alpha_1})\tau(S_{\alpha_1})^{-1/2}\right\|_2,
\end{aligned}
\end{equation}
where $$\mathbf{z} = (I_{N} \otimes \underbrace{\cdots}_{d-3} \otimes I_{N} \otimes  \tau(S_{\alpha_1})^{1/2}B_{\alpha_1}^{-1/2})\mathbf{v}.$$ 

 According to Lemma \ref{tauT}, Lemma \ref{Heig} and the estimate \eqref{addbound}, we have
\begin{equation*}
\frac{\left| \mathbf{v}^{*}(I_{N} \otimes \cdots \otimes I_{N} \otimes B_{\alpha_1}^{-1} H(S_{\alpha_1}))\mathbf{v} \right|}{\mathbf{v}^{*}(I_{N} \otimes \cdots \otimes I_{N} \otimes B_{\alpha_1}^{-1} \tau(S_{\alpha_1}))\mathbf{v}}
\leq \frac{\sqrt{6}}{2}\times\frac{1}{2}=\frac{\sqrt{6}}{4}<1.
\end{equation*}
Therefore, it follows that 
\begin{equation}
\label{Theq3}
\left|\frac{\mathbf{v}^{*}\left(E^{(m+1/2)}-\bar{e}I_{N^d} \right)\mathbf{v}+\mathbf{v}^{*}B_{\alpha}^{-1}H(S_{\alpha})\mathbf{v}}
{\bar{e}\mathbf{v}^{*}\mathbf{v} + \mathbf{v}^{*} B_{\alpha}^{-1}\tau(S_{\alpha})\mathbf{v}}\right|
\leq
\max\left\{ \frac{\hat{e}-\check{e}}{\hat{e}+\check{e}},
\frac{\sqrt{6}}{4}
\right\}.
\end{equation}
From \eqref{Theq3}, we have
\begin{equation}
\label{Theq4}
\begin{aligned}
\min\left\{\frac{2\check{e}}{\hat{e}+\check{e}},\frac{4-\sqrt{6}}{4}\right\}
& \leq 1-\max_{\mathbf{v} \in \mathbb{C}^{N^d}} \left\{\frac{\left|\mathbf{v}^{*}\left(E^{(m+1/2)}-\bar{e}I_{N^d}\right)\mathbf{v}\right|}
{\bar{e}\mathbf{v}^{*}\mathbf{v} },\frac{\left|\mathbf{v}^{*}B_{\alpha}^{-1}H(S_{\alpha})\mathbf{v}\right|}{\mathbf{v}^{*} B_{\alpha}^{-1}\tau(S_{\alpha})\mathbf{v}}\right\}\\
&\leq
1-\frac{\left|\mathbf{v}^{*}\left(E^{(m+1/2)}-\bar{e}I_{N^d} \right)\mathbf{v}\right|+\left|\mathbf{v}^{*}B_{\alpha}^{-1}H(S_{\alpha})\mathbf{v}\right|}
{\bar{e}\mathbf{v}^{*}\mathbf{v} + \mathbf{v}^{*} B_{\alpha}^{-1}\tau(S_{\alpha})\mathbf{v}}\\
&\leq v(\breve{A}_{\alpha})
\leq \frac{\left|\mathbf{y}^{*}\breve{A}_{\alpha}^{(m)}\mathbf{y}\right|}{\mathbf{y}^{*}\mathbf{y}}
\leq r(\breve{A}_{\alpha})\\
& \leq 1+\frac{\left|\mathbf{v}^{*}\left(E^{(m+1/2)}-\bar{e}I_{N^d}\right)\mathbf{v}\right|+\left|\mathbf{v}^{*}B_{\alpha}^{-1}H(S_{\alpha})\mathbf{v}\right|}
{\bar{e}\mathbf{v}^{*}\mathbf{v} + \mathbf{v}^{*} B_{\alpha}^{-1}\tau(S_{\alpha})\mathbf{v}}\\
& \leq 1+\max_{\mathbf{v} \in \mathbb{C}^{N^d}} \left\{\frac{\left|\mathbf{v}^{*}\left(E^{(m+1/2)}-\bar{e}I_{N^d}\right)\mathbf{v}\right|}
{\bar{e}\mathbf{v}^{*}\mathbf{v} },\frac{\left|\mathbf{v}^{*}B_{\alpha}^{-1}H(S_{\alpha})\mathbf{v}\right|}{\mathbf{v}^{*} B_{\alpha}^{-1}\tau(S_{\alpha})\mathbf{v}}\right\}\\
&\leq \max\left\{\frac{2\hat{e}}{\hat{e}+\check{e}},\frac{4+\sqrt{6}}{4}\right\}.\\
\end{aligned}
\end{equation}

Finally, by Lemma \ref{bound} and \eqref{Theq4}, we get
\begin{align}
c=&\max\left\{\sqrt{1-\frac{\check{e}^2}{\hat{e}^2}},~\sqrt{\frac{16\sqrt{6}}{(4+\sqrt{6})^2}},~
\sqrt{1-\frac{(\hat{e}+\check{e})^2(11-4\sqrt{6})}{32\hat{e}^2}},~
\sqrt{1-\frac{32\check{e}^2}{(11+4\sqrt{6})(\hat{e}+\check{e})^2}}
\right\}. \notag
\end{align}
\end{proof}
We supplement the properties of the proposed two-sided preconditioned solver with some remarks.

\begin{remark}
The proposed preconditioner $P_{\alpha}$ is independent of the time step. In the whole calculation, the eigenvalues of $P_{\alpha}$ need only to be evaluated once.
\end{remark}

\begin{remark}
If the coefficient matrix of other linear systems can be represented in a tensor product form analogous to \eqref{tls} (not necessarily arising from the discretization of fractional-order PDEs), characterized by a specific (block) tridiagonal matrix and a strictly diagonally dominant space-discretization matrix, we can infer that the PGMRES method all adhere to the same bound, as stated in Theorem \ref{GMRES}.
\end{remark}

In the previous text, we established the convergence rate of the two-sided preconditioned GMRES method. However, each iteration of the two-sided preconditioned GMRES method is computationally more expensive than its one-sided counterpart. To further reduce the cost, we now examine the efficiency of the one-sided preconditioned solver, beginning with its definition:
\begin{equation}
\label{one-sided}
P_{\alpha}^{-1}\tilde{A}_{\alpha}^{(m)}\mathbf{u}^{(m+1)}
=P_{\alpha}^{-1}\tilde{\mathbf{b}}^{(m)},~~0\leq m\leq M-1.
\end{equation}

Next, we demonstrate the residual relationship between the one-sided preconditioned solver \eqref{one-sided} and the two-sided preconditioned solver \eqref{Pls} to better illustrate that the numerical range of the one-sided preconditioner is also controlled by this relationship. 

Recall from \cite{Linx2023}, for a square matrix $E \in \mathbb{R}^{n \times n}$ and a vector $\mathbf{v} \in \mathbb{R}^{n \times 1}$, a Krylov subspace of degree $j \geq 1$ is defined as follows,
$$
\mathcal{K}_j(E, \mathbf{v}):=\operatorname{span}\left\{\mathbf{v}, E\mathbf{v}, E^2 \mathbf{v}, \ldots, E^{j-1} \mathbf{v}\right\}.
$$
For a set $\mathcal{S}$ and a point $\mathbf{z}$, we denote
$$
\mathbf{z}+\mathcal{S}:=\{\mathbf{z}+\mathbf{v} \mid \mathbf{v} \in \mathcal{S}\} .
$$

\begin{lemma} [{\cite{Saad86}}]\label{Krylov}
 For a non-singular $n \times n$ real linear system $A \mathbf{y}=\mathbf{b}$, let $\mathbf{y}^{(j)}$ be the iterative solution by GMRES at $j$-th $(j \geq 1)$ iteration step with $\mathbf{y}^{(0)}$ as initial guess. Then, the $j$-th iteration solution $\mathbf{y}^{(j)}$ minimizes the residual error over the Krylov subspace $\mathcal{K}_j\left(A, \mathbf{r}_0\right)$ with $\mathbf{r}_0=\mathbf{b}-A \mathbf{y}^{(0)}$, i.e.,
$$
\mathbf{y}^{(j)}=\underset{\mathbf{q} \in \mathbf{y}^{(0)}+\mathcal{K}_j\left(A, \mathbf{r}_0\right)}{\arg \min }\|\mathbf{b}-A\mathbf{q}\|_2 .
$$
\end{lemma}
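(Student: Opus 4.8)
The plan is to derive the claimed optimality characterization directly from the construction of GMRES via the Arnoldi process, which is the standard route. First I would set $\beta = \|\mathbf{r}_0\|_2$ and run $j$ steps of the Arnoldi algorithm on $A$ starting from $\mathbf{v}_1 = \mathbf{r}_0/\beta$, producing an orthonormal basis $V_j = [\mathbf{v}_1,\ldots,\mathbf{v}_j]$ of $\mathcal{K}_j(A,\mathbf{r}_0)$ together with the Arnoldi relation $AV_j = V_{j+1}\bar{H}_j$, where $V_{j+1} = [V_j,\mathbf{v}_{j+1}]$ has orthonormal columns and $\bar{H}_j \in \mathbb{R}^{(j+1)\times j}$ is upper Hessenberg. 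Here I would assume no lucky breakdown occurs before step $j$, i.e.\ $\dim \mathcal{K}_j(A,\mathbf{r}_0) = j$; otherwise the exact solution has already been reached and the statement is trivial, a case guaranteed to be detectable since $A$ is nonsingular.

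Second, I would parametrise the affine search space: every $\mathbf{v}\in \mathbf{y}^{(0)}+\mathcal{K}_j(A,\mathbf{r}_0)$ can be written uniquely as $\mathbf{v} = \mathbf{y}^{(0)} + V_j\mathbf{z}$ with $\mathbf{z}\in\mathbb{R}^j$, and conversely. The residual at such a $\mathbf{v}$ is then $\mathbf{b}-A\mathbf{v} = \mathbf{r}_0 - AV_j\mathbf{z} = \beta\mathbf{v}_1 - V_{j+1}\bar{H}_j\mathbf{z} = V_{j+1}(\beta\mathbf{e}_1 - \bar{H}_j\mathbf{z})$, where $\mathbf{e}_1$ is the first canonical basis vector of $\mathbb{R}^{j+1}$. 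Since $V_{j+1}$ has orthonormal columns, $\|\mathbf{b}-A\mathbf{v}\|_2 = \|\beta\mathbf{e}_1 - \bar{H}_j\mathbf{z}\|_2$, so minimising the residual over the affine subspace $\mathbf{y}^{(0)}+\mathcal{K}_j(A,\mathbf{r}_0)$ is \emph{equivalent} to solving the small least-squares problem $\min_{\mathbf{z}\in\mathbb{R}^j}\|\beta\mathbf{e}_1 - \bar{H}_j\mathbf{z}\|_2$.

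Third, I would invoke the definition of the GMRES iterate itself: by construction GMRES returns exactly $\mathbf{y}^{(j)} = \mathbf{y}^{(0)} + V_j\mathbf{z}^{(j)}$, where $\mathbf{z}^{(j)}$ solves that least-squares problem (uniquely, since nonsingularity of $A$ forces $\bar{H}_j$ to have full column rank). Combining this with the equivalence established in the previous step yields $\mathbf{y}^{(j)} = \arg\min_{\mathbf{v}\in\mathbf{y}^{(0)}+\mathcal{K}_j(A,\mathbf{r}_0)}\|\mathbf{b}-A\mathbf{v}\|_2$, which is precisely the assertion of the lemma. I do not expect a genuine obstacle here: the result is classical, and the only points requiring minor care are the breakdown case and the full-rank argument just mentioned; alternatively one may simply defer to \cite{Saad86,Linx2023} and omit the derivation entirely.
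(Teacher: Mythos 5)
Your argument is correct and is the standard Arnoldi-based derivation of the GMRES optimality property (essentially Proposition 6.9 in Saad's book). Note, however, that the paper does not actually prove this lemma: it simply cites it from \cite{Linx2023} (and it is equally classical in \cite{Saad86}), treating it as a known auxiliary fact. So there is no ``paper's proof'' to compare against; you have reconstructed the textbook argument that those references rely on, and your own closing remark --- that one may simply defer to the cited sources --- is precisely what the authors do. The two minor caveats you flag (lucky breakdown before step $j$, and full column rank of $\bar{H}_j$ following from nonsingularity of $A$) are exactly the points that need care, and you handle them appropriately; no gap.
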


\begin{theorem}\label{one-sidethm}
Let $\tilde{\mathbf{u}}^{(0)}$ be the initial guess for \eqref{Pls}. Let $\mathbf{u}^{(0)}:=P_{\alpha}^{-1/2} \tilde{\mathbf{u}}^{(0)}$ be the initial guess for \eqref{one-sided}. Let $\mathbf{u}^{(j)} (\tilde{\mathbf{u}}^{(j)}$, respectively$)$ be the $j$-th $(j \geq 1)$ iteration solution derived by applying GMRES solver to \eqref{one-sided}$($\eqref{Pls}, respectively$)$ with $\mathbf{u}^{(0)}(\tilde{\mathbf{u}}^{(0)}$, respectively$)$ as initial guess. Then,
$$
\left\|\mathbf{r}_{j}\right\|_2 \leq \frac{1}{\sqrt{\bar{e}}}\left\|\tilde{\mathbf{r}}_{j}\right\|_2,
$$
where $\mathbf{r}_{j}:=P_\alpha^{-1} \tilde{\mathbf{b}}^{(m)} - P_\alpha^{-1} \tilde{A}_\alpha^{(m)} \mathbf{u}^{(j)} (\tilde{\mathbf{r}}_{j}:=P_\alpha^{-1/2} \tilde{\mathbf{b}}^{(m)}-P_\alpha^{-1/2} \tilde{A}_\alpha^{(m)} P_\alpha^{-1/2} \tilde{\mathbf{u}}^{(j)}$, respectively$)$ denotes the residual vector at $j$-th GMRES iteration for \eqref{one-sided} $($\eqref{Pls}, respectively$)$; $\bar{e}>0$ defined in \eqref{bard} is a constant independent of grid size $N$ and $M$.
\end{theorem}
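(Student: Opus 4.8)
The plan is to exploit the similarity between the one-sided iteration matrix $B:=P_{\alpha}^{-1}\tilde{A}_{\alpha}^{(m)}$ and the two-sided iteration matrix $\tilde{B}:=P_{\alpha}^{-1/2}\tilde{A}_{\alpha}^{(m)}P_{\alpha}^{-1/2}$, namely $B=P_{\alpha}^{-1/2}\tilde{B}P_{\alpha}^{1/2}$, together with the optimality property of GMRES in Lemma~\ref{Krylov}. As a preliminary ingredient I would record a norm bound on $P_{\alpha}^{-1/2}$: since $\tau(S_{\alpha})$ is symmetric positive definite (as shown in the proof of Lemma~\ref{PSPD}), so is $H_{\alpha}^{-1/2}\tau(S_{\alpha})H_{\alpha}^{-1/2}$, hence $P_{\alpha}=\bar{e}I_{N^d}+H_{\alpha}^{-1/2}\tau(S_{\alpha})H_{\alpha}^{-1/2}$ satisfies $\mathbf{x}^{*}P_{\alpha}\mathbf{x}>\bar{e}\,\mathbf{x}^{*}\mathbf{x}$ for all $\mathbf{x}\neq 0$, so that $\|P_{\alpha}^{-1/2}\|_2\leq 1/\sqrt{\bar{e}}$, a bound independent of $N$ and $M$.

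Next I would relate the two sequences of Krylov iterates. Using $\mathbf{u}^{(0)}=P_{\alpha}^{-1/2}\tilde{\mathbf{u}}^{(0)}$, a direct computation gives $\mathbf{r}_0=P_{\alpha}^{-1}\tilde{\mathbf{b}}^{(m)}-B\mathbf{u}^{(0)}=P_{\alpha}^{-1/2}\bigl(P_{\alpha}^{-1/2}\tilde{\mathbf{b}}^{(m)}-\tilde{B}\tilde{\mathbf{u}}^{(0)}\bigr)=P_{\alpha}^{-1/2}\tilde{\mathbf{r}}_0$. Combined with $B^k=P_{\alpha}^{-1/2}\tilde{B}^{k}P_{\alpha}^{1/2}$, this yields the Krylov-subspace correspondence $\mathcal{K}_j(B,\mathbf{r}_0)=P_{\alpha}^{-1/2}\mathcal{K}_j(\tilde{B},\tilde{\mathbf{r}}_0)$ for every $j\geq 1$, and consequently $\mathbf{u}^{(0)}+\mathcal{K}_j(B,\mathbf{r}_0)$ is exactly the image of $\tilde{\mathbf{u}}^{(0)}+\mathcal{K}_j(\tilde{B},\tilde{\mathbf{r}}_0)$ under the map $\mathbf{z}\mapsto\mathbf{u}^{(0)}+P_{\alpha}^{-1/2}(\mathbf{z}-\tilde{\mathbf{u}}^{(0)})$.

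The core of the argument then uses the optimal two-sided iterate as a feasible candidate for the one-sided subproblem. Let $\tilde{\mathbf{u}}^{(j)}$ be the $j$-th two-sided GMRES iterate, so $\mathbf{w}:=\tilde{\mathbf{u}}^{(j)}-\tilde{\mathbf{u}}^{(0)}\in\mathcal{K}_j(\tilde{B},\tilde{\mathbf{r}}_0)$ and $\tilde{\mathbf{r}}_0-\tilde{B}\mathbf{w}=\tilde{\mathbf{r}}_j$. Put $\mathbf{v}:=\mathbf{u}^{(0)}+P_{\alpha}^{-1/2}\mathbf{w}\in\mathbf{u}^{(0)}+\mathcal{K}_j(B,\mathbf{r}_0)$; then $P_{\alpha}^{-1}\tilde{\mathbf{b}}^{(m)}-B\mathbf{v}=\mathbf{r}_0-BP_{\alpha}^{-1/2}\mathbf{w}=P_{\alpha}^{-1/2}\bigl(\tilde{\mathbf{r}}_0-\tilde{B}\mathbf{w}\bigr)=P_{\alpha}^{-1/2}\tilde{\mathbf{r}}_j$. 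By Lemma~\ref{Krylov} applied to the one-sided system~\eqref{one-sided}, $\|\mathbf{r}_j\|_2\leq\|P_{\alpha}^{-1}\tilde{\mathbf{b}}^{(m)}-B\mathbf{v}\|_2=\|P_{\alpha}^{-1/2}\tilde{\mathbf{r}}_j\|_2\leq\|P_{\alpha}^{-1/2}\|_2\|\tilde{\mathbf{r}}_j\|_2\leq\frac{1}{\sqrt{\bar{e}}}\|\tilde{\mathbf{r}}_j\|_2$, which is the assertion.

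I do not anticipate a genuine obstacle here; the only point requiring care is the bookkeeping that the affine feasible set of the one-sided GMRES coincides with the $P_{\alpha}^{-1/2}$-image of the two-sided one, for which the identity $\mathbf{r}_0=P_{\alpha}^{-1/2}\tilde{\mathbf{r}}_0$ and the similarity $B=P_{\alpha}^{-1/2}\tilde{B}P_{\alpha}^{1/2}$ must be invoked together. The constant $1/\sqrt{\bar{e}}$ arises solely from the spectral lower bound $P_{\alpha}\succ\bar{e}I_{N^d}$, which is mesh- and dimension-independent, so the residual comparison inherits the same robustness; in particular, combining it with Theorem~\ref{GMRES} transfers the mesh-independent convergence bound to the one-sided solver~\eqref{one-sided}.
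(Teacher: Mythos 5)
Your proof is correct and follows essentially the same route as the paper: you exploit the similarity $P_\alpha^{-1}\tilde A_\alpha^{(m)}=P_\alpha^{-1/2}\bigl(P_\alpha^{-1/2}\tilde A_\alpha^{(m)}P_\alpha^{-1/2}\bigr)P_\alpha^{1/2}$ to identify the one-sided affine search space with the $P_\alpha^{-1/2}$-image of the two-sided one, use $P_\alpha^{-1/2}\tilde{\mathbf u}^{(j)}$ as the feasible candidate whose one-sided residual is $P_\alpha^{-1/2}\tilde{\mathbf r}_j$, invoke GMRES optimality via Lemma~\ref{Krylov}, and finish with the spectral bound $P_\alpha\succeq\bar e I_{N^d}$. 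The only cosmetic difference is that you bound $\|P_\alpha^{-1/2}\tilde{\mathbf r}_j\|_2$ via $\|P_\alpha^{-1/2}\|_2\le 1/\sqrt{\bar e}$ while the paper writes it as $\sqrt{\tilde{\mathbf r}_j^{\top}P_\alpha^{-1}\tilde{\mathbf r}_j}$ and uses $\|P_\alpha^{-1}\|_2\le 1/\bar e$, which are the same estimate.
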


\begin{proof}
Denote $\breve{A}_{\alpha}^{(m)}=P_{\alpha}^{-1/2}\tilde{A}_{\alpha}^{(m)}P_{\alpha}^{-1/2}$. By applying Lemma \ref{Krylov} to \eqref{Pls}, we infer that

\begin{equation}\label{eq3.17} 
   \tilde{\mathbf{u}}^{(j)}-\tilde{\mathbf{u}}^{(0)} \in \mathcal{K}_{j}\left(\breve{A}_{\alpha}^{(m)}, \tilde{\mathbf{r}}_0\right), 
\end{equation}
where $\tilde{\mathbf{r}}_0=P_\alpha^{-1/2} \tilde{\mathbf{b}}^{(m)} - \breve{A}_{\alpha}^{(m)} \tilde{\mathbf{u}}^{(0)}$. Notice that $\left(\breve{A}_{\alpha}^{(m)} \right)^k=P_\alpha^{1/2}\left(P_\alpha^{-1} \tilde{A}_\alpha^{(m)} \right)^k P_\alpha^{-1/2}$ for each $k \geq 0$. Therefore,

\begin{equation}\label{eq3.18}
\begin{aligned}
\mathcal{K}_{j}\left(\breve{A}_{\alpha}^{(m)}, \tilde{\mathbf{r}}_0\right) & =\operatorname{span}\left\{\left(\breve{A}_{\alpha}^{(m)}\right)^k\left(P_\alpha^{-1/2} \tilde{\mathbf{b}}^{(m)} - \breve{A}_{\alpha}^{(m)}\tilde{\mathbf{u}}^{(0)}\right)\right\}_{k=0}^{j-1} \\
& =\operatorname{span}\left\{P_\alpha^{1/2}\left(P_\alpha^{-1} \tilde{A}_\alpha^{(m)}\right)^k P_\alpha^{-1/2}\left(P_\alpha^{-1/2} \tilde{\mathbf{b}}^{(m)}-P_\alpha^{-1/2} \tilde{A}_\alpha^{(m)} P_\alpha^{-1/2} \tilde{\mathbf{u}}^{(0)}\right)\right\}_{k=0}^{j-1} \\
& =\operatorname{span}\left\{P_\alpha^{1/2}\left(P_\alpha^{-1} \tilde{A}_\alpha^{(m)}\right)^k\left(P_\alpha^{-1} \tilde{\mathbf{b}}^{(m)}-P_\alpha^{-1} \tilde{A}_\alpha^{(m)} \mathbf{u}^{(0)}\right)\right\}_{k=0}^{j-1}. 
\end{aligned}
\end{equation}
Furthermore, we easily obtain

\begin{equation}\label{eq3.19}
\begin{aligned}
P_\alpha^{-1/2} \tilde{\mathbf{u}}^{(j)}-\mathbf{u}^{(0)}&=P_\alpha^{-1/2}\left(\tilde{\mathbf{u}}^{(j)}-\tilde{\mathbf{u}}^{(0)}\right) \\
&\in \operatorname{span}\left\{\left(P_\alpha^{-1} \tilde{A}_\alpha^{(m)}\right)^k\left(P_\alpha^{-1} \tilde{\mathbf{b}}^{(m)}-P_\alpha^{-1} \tilde{A}^{(m)}_\alpha \mathbf{u}^{(0)}\right)\right\}_{k=0}^{j-1} \\
&=\mathcal{K}_{j}\left(P_\alpha^{-1} \tilde{A}_\alpha^{(m)}, \mathbf{r}_0\right),
\end{aligned}
\end{equation}
where $\mathbf{r}_0=P_\alpha^{-1} \tilde{\mathbf{b}}^{(m)}-P_\alpha^{-1} \tilde{A}_\alpha^{(m)} \mathbf{u}^{(0)}$. In other words,
$$
P_\alpha^{-1/2}  \tilde{\mathbf{u}}^{(j)} \in \mathbf{u}^{(0)}+\mathcal{K}_{j}\left(P_\alpha^{-1} \tilde{A}_\alpha^{(m)}, \mathbf{r}_0\right).
$$
By applying Lemma \ref{Krylov} to \eqref{one-sided}, we know that

\begin{equation*}
    \mathbf{u}^{(j)}=\underset{\mathbf{q} \in \mathbf{u}^{(0)} + \mathcal{K}_{j}\left( P_\alpha^{-1} \tilde{A}_\alpha^{(m)}, \mathbf{r}_0\right)}{\arg \min }\left\|P_\alpha^{-1} \tilde{\mathbf{b}}^{(m)}-P_\alpha^{-1} \tilde{A}_\alpha^{(m)} \mathbf{q}\right\|_2 .
\end{equation*}
It is easy to check that $\bar{e} I_{N^d} + B_{\alpha}^{-1/2} \tau(S_\alpha) B_{\alpha}^{-1/2} > \bar{e} I_{N^d}$ from Lemma \ref{PSPD}. Then we get

\begin{equation*}
\begin{aligned}
\left\| \left(\bar{e} I_{N^d} + B_{\alpha}^{-1/2} \tau(S_\alpha) B_{\alpha}^{-1/2} \right)^{-1} \right\|_2 = \left\| P_\alpha^{-1}\right\|_2 \leq \frac{1}{\bar{e}}.
\end{aligned}
\end{equation*}
Therefore,

\begin{equation}
\begin{split}
\label{eq3.20}
\left\|\mathbf{r}_{j}\right\|_2=\left\|P_\alpha^{-1} \tilde{\mathbf{b}}^{(m)}-P_\alpha^{-1} \tilde{A}_\alpha^{(m)} \mathbf{u}^{(j)}\right\|_2 & \leq\left\|P_\alpha^{-1} \tilde{\mathbf{b}}^{(m)} - P_\alpha^{-1} \tilde{A}_\alpha^{(m)}  P_\alpha^{-1/2} \tilde{\mathbf{u}}^{(j)}\right\|_2 \\
& \leq \left\|P_\alpha^{-1/2} \right\|_2 \left\|\tilde{\mathbf{r}}_{j}\right\|_2\\
&=\frac{1}{\sqrt{\bar{e}}}\left\|\tilde{\mathbf{r}}_{j}\right\|_2.
\end{split}
\end{equation}

\end{proof}

We conclude this section with the following remark.
\begin{remark}
Although the research in \cite{Linx2023} centers on the comparable convergence analysis of the one- and two-sided PGMRES method, the coefficient $\check{c}$ in that study tends to 0. Consequently, the preconditioned solver employed may exhibit suboptimal performance when dealing with matrices of large size.
In stark contrast, the coefficient $\bar{e}$ in \eqref{eq3.20} meets the condition \eqref{bard}, remaining independent of the spatial grid. As long as $\bar{e} \geq 1$, we can guarantee that $\left\|\mathbf{r}_{j}\right\|_2 \leq \left\|\tilde{\mathbf{r}}_{j}\right\|_2$. 

\end{remark}

\section{Numerical results}
\label{Numer}
In this section, numerical examples are presented to illustrate the efficiency of the proposed methods. All numerical experiments were conducted using MATLAB R2023a on 11th Gen Intel(R) Core(TM) i7-11700K @ 3.60GHz and 48.0 GB of RAM.

To show the effectiveness of our proposed PGMRES solvers, include one- and two-sided preconditioned versions. In any case, the GMRES solver is implemented with the built-in function of MATLAB. Specifically, the stopping criterion for GMRES is set as $\left\|\mathbf{r}_k\right\|_2 \leq \text{tol} \left\|\mathbf{r}_0\right\|_2$, where $\mathbf{r}_k$ denotes the residual vector in the $k$-th GMRES iteration for $k \geq 1$ and $\mathbf{r}_0$ denotes the initial residual vector. In the one- and two-dimensional cases, we set the tolerance to $\text{tol} = 10^{-9}$, while in the three-dimensional case, since higher precision is required, the tolerance is set to $\text{tol} = 10^{-10}$. The maximum number of iterations for GMRES is set at 10, 100, 200 in one, two, and three dimensions, respectively. For clarity, to show a fourth-order convergence rate of the proposed scheme, we set $M = N^2$ in all subsequent numerical experiments. For explanations of the symbols appearing in the tables and figures, see Table \ref{table1}, where the norm $|| \cdot||_{L_2}$ denotes the $L_2$-norm.

 \begin{table}[htbp]
\centering
\caption{Explanation of all symbols used in the subsequent numerical experiments}
\label{table1}
\begin{tabular}{ll}
\toprule
Symbol & Explanation \\
\midrule
SPGM & The One-sided Strang PGMRES method to solve \eqref{tls} \\
OPGM & The One-sided PGMRES method (Self-implemented) to solve \eqref{one-sided} \\
TPGM & The Two-sided PGMRES method (Self-implemented) to solve \eqref{Pls} 
\\
Error & $\text { Error }=\left\|\mathbf{u}^*-\mathbf{u}\right\|_{L_2}$, where $\mathbf{u}$ denotes numerical solution and $\mathbf{u}^*$ denotes exact solution 
\\
CPU &  Total CPU time in seconds for solving the whole system 
\\
Iter & The number of iterations averaged over the time steps \\
$\Re(\lambda)$ & The real part of the eigenvalue  $\lambda$ \\
$\Im(\lambda)$ & The imaginary part of the eigenvalue  $\lambda$ \\
\bottomrule
\end{tabular}
\end{table} 

The Strang circulant preconditioner is defined by 
\begin{equation*}
\label{Pcir}
P_{c}=\bar{e}I_{N^d} + c(B_{\alpha})^{-1/2}c(S_{\alpha})c(B_{\alpha})^{-1/2},
\end{equation*}
where $\bar{e}$ is defined by \eqref{bard},
\begin{equation*}
\label{cirSalpha}
\begin{aligned}
c(S_{\alpha}) &= \eta_1(s(B_{\alpha_d}) \otimes \cdots \otimes s(B_{\alpha_{2}}) \otimes s(S_{\alpha_1})) + \cdots + \eta_d(s(S_{\alpha_d}) \otimes s(B_{\alpha_{d-1}}) \otimes \cdots \otimes s(B_{\alpha_1})),\\
c(B_{\alpha})&=s(B_{\alpha_d})\otimes \cdots \otimes s(B_{\alpha_{2}}) \otimes s(B_{\alpha_1}),
\end{aligned}
\end{equation*}
and $s(T)$ is the Strang circulant preconditioner for any given Toeplitz matrix $T$. For further properties of the Strang circulant preconditioner, see \cite{SSC-SIMAX-99}.

It is worth remarking that, as suggested in Refs. \cite{DMS-JCP-16,She2021}, banded preconditioners for linear systems \eqref{equi} can also be constructed by replacing each $S_{\alpha_i}$ with appropriately chosen banded matrices. However, owing to the tensor-product structure, the storage requirements for the incomplete LU factorization of a high-dimensional banded preconditioner become prohibitive, and CPU time far exceeds that of the method proposed herein.  Consequently, numerical results for the banded preconditioners are omitted.

\begin{example}
In the one-dimensional grid of $(N+1)$, instantaneous solute release point sources are established at the internal grid points. The initial concentration $\psi$ of these point sources satisfies $\psi=\psi(x_{1})=100x_1^4(1-x_1)^4$, and the initial concentration of other boundary points is $0$. We consider a diffusion coefficient $\kappa_1=100$, a variable coefficient $e(x_1,t)=(x_1^2 + e^{-t})/50$ and exact solution $u(x_1,t) = 100 e^{-t}x_1^4(1-x_1)^4$ for $(x_1,t)\in[0,1]\times[0,1]$.
\label{example1}
\end{example}

\begin{table}[htbp]
\tabcolsep=9.0pt \label{table2}
\centering
		\caption{Comparisons for different $\alpha_1$ by one-sided and two-sided PGMRES solvers in one dimension.} 
		\begin{tabular}{c|c|c|ccccccc}
			\hline \multirow{2}{*}{$\alpha_1$} & \multirow{2}{*}{$M$} & \multirow{2}{*}{$N + 1$} & \multirow{2}{*}{Error} & \multicolumn{2}{c}{SPGM} & \multicolumn{2}{c}{OPGM} & \multicolumn{2}{c}{TPGM}\\
			\cline { 5 - 10 }  & &  & & CPU & Iter & CPU & Iter & CPU & \multicolumn{1}{c}{Iter} \\
			\hline 
			$1.30$& $2^{12}$ & $2^4$ & 9.44e-5 & 0.63 & 7.0 & 0.85 & 6.0 & 1.32 & 6.2 \\
			& & $2^5$ & 5.44e-6 & 0.79 & 8.0 & 1.02 & 6.6 & 1.51 & 6.8 \\
			& & $2^6$ & 3.19e-7 & 0.93 & 8.0 & 1.17 & 7.0 & 1.61 & 7.0  \\
			\cline{2-10}
			& $2^{14}$& $2^4$ & 6.64e-5 & 2.62 & 8.0 & 3.76 & 7.1 & 5.66 & 7.2 \\
			& & $2^5$ & 3.91e-6 & 3.36 & 8.8  & 4.35 & 7.2 & 6.62 & 7.5 \\
			& & $2^6$ & 2.37e-7 & 3.64 & 9.0 & 4.83 & 7.9 & 7.05 & 8.0 \\
			& & $2^7$ & 1.46e-8 & 6.73 & 9.1 & 6.98 & 8.0 & 9.85 & 8.0 \\
            \hline
                $1.50$ & $2^{12}$ & $2^4$ & 1.27e-4 & 0.62 & 7.0 & 0.82 & 6.0 & 1.27 & 6.0  \\
			& & $2^5$ & 7.30e-6 & 0.76 & 7.3 & 0.98 & 6.0 & 1.48 & 6.5 \\
			& & $2^6$ & 4.23e-7 & 0.85 & 8.0 & 1.07 & 6.8 & 1.60 & 7.0  \\
			\cline{2-10}
			& $2^{14}$& $2^4$ & 9.30e-5 & 2.59 & 8.0 & 3.58 & 6.4 & 5.32
			& 6.7  \\
			& & $2^5$ & 5.42e-6 & 3.22 & 8.2 & 4.22 & 7.0 & 6.33 & 7.0 \\
			& & $2^6$ & 3.20e-7 & 3.81 & 8.7 & 4.65 & 7.0 & 6.64 & 7.3  \\
			& & $2^7$ & 1.92e-8 & 7.33 & 9.0 & 6.63 & 7.3 & 9.54 & 7.8 \\
             \hline
                 $1.90$ & $2^{12}$ & $2^4$ & 2.34e-4 & 0.57 &  6.1
			& 0.73 & 5.0 & 1.17 & 5.0 \\
			& & $2^5$ & 1.44e-5 & 0.76 & 7.0 & 0.88 & 5.0 & 1.27 & 5.1\\
			& & $2^6$ & 8.81e-7 & 0.78 & 7.0 & 0.93 & 5.0 & 1.41 & 5.6 \\
			\cline{2-10}
			&$2^{14}$& $2^4$ & 1.78e-4 & 2.50 & 7.0 & 3.28 & 5.3 & 4.96 & 5.7 \\
			&& $2^5$ & 1.09e-5 & 2.97 & 7.2 & 3.61 & 5.4 & 5.52 & 5.9 \\
			&& $2^6$ & 6.69e-7 & 3.49 & 7.5 & 4.00 & 5.8 & 6.08 & 6.0 \\
			&& $2^7$ & 4.08e-8 & 6.21 & 7.9 & 5.74 & 6.0 & 8.51 & 6.0\\
   \hline
	\end{tabular}
\end{table}

Table \ref{table2} displays the errors, CPU times, and iterations obtained by SPGM, OPGM and TPGM, considering different values of $\alpha_1$, $M$, and $N$. The related results show both OPGM and TPGM have similar iteration numbers. Moreover, irrespective of variations in parameters $\alpha_1$, $M$, and $N$, OPGM requires fewer preconditioning steps per iteration compared to TPGM, which results in reduced CPU requirements reflected in the data and supported the theoretical results
presented in Theorem \ref{GMRES} and Theorem \ref{one-sidethm}.
\begin{figure}[htbp]\label{fig1}
	\centering
	\begin{minipage}{0.49\linewidth}
		\centering
\includegraphics[width=1.0\linewidth]{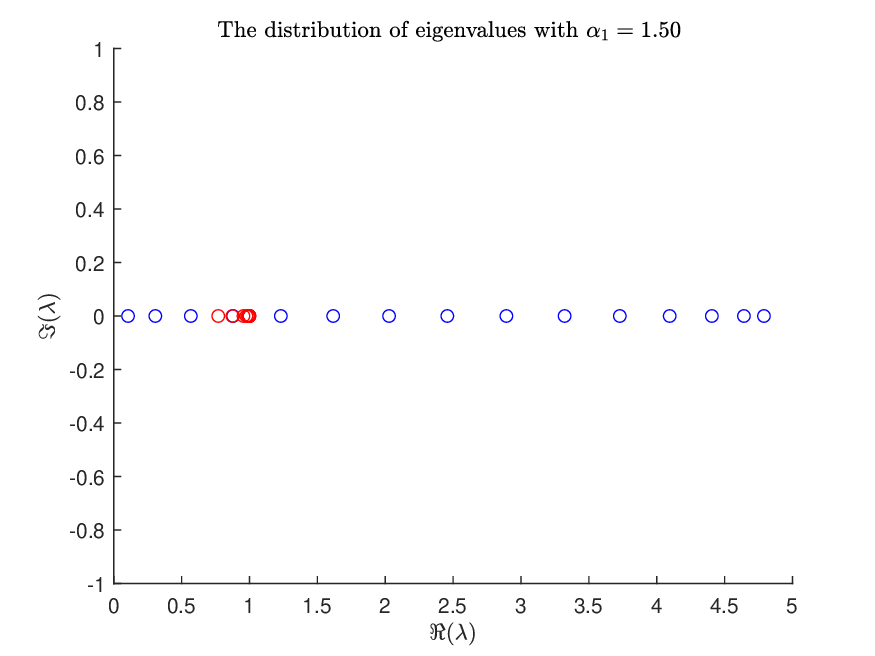}
	\end{minipage}
	\begin{minipage}{0.49\linewidth}
		\centering
\includegraphics[width=1.0\linewidth]{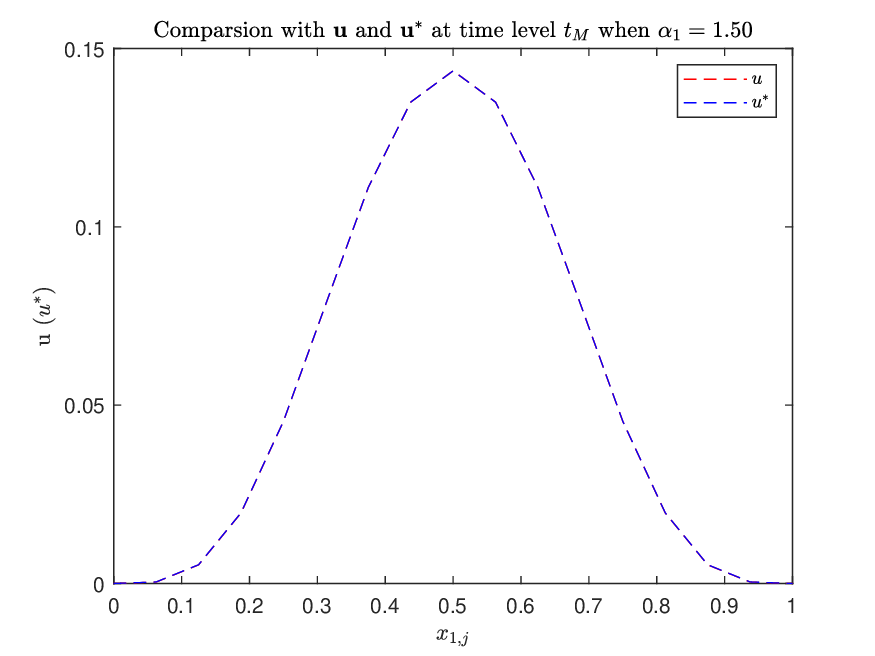}
	\end{minipage}
 \caption{The one-dimensional results when $N+1 = 2^4$ and $M=2^{12}$.}
\end{figure}

In Fig. \ref{fig1}, the right side presents a comparison between the exact solution and the numerical solution obtained in Example \ref{example1} at time layer $t_M$, with $\alpha_1=1.5$. Additionally, since $P_{\alpha}^{-1}\tilde{A}_{\alpha}^{(m)}$ is equal to $P_{\alpha}^{-1/2}\tilde{A}_{\alpha}^{(m)}P_{\alpha}^{-1/2}$, we only need to depict the eigenvalue distribution of $P_{\alpha}^{-1}\tilde{A}_{\alpha}^{(m)}$. The distribution of eigenvalues for both the coefficient matrix $\tilde{A}_{\alpha}^{(m)}$ and the preconditioned matrix $P_{\alpha}^{-1}\tilde{A}_{\alpha}^{(m)}$ is displayed. It can be seen in this figure that the eigenvalues of $\tilde{A}_{\alpha_1}^{(m)}$ denoted in color `blue' are more dispersed than those of the matrix $P_{\alpha_1}^{-1}\tilde{A}_{\alpha_1}^{(m)}$, indicated in color `red'. Notably, the eigenvalues of $P_{\alpha}^{-1}\tilde{A}_{\alpha}^{(m)}$ appear to be more tightly clustered compared to those of $\tilde{A}_{\alpha}^{(m)}$. This highlights the effectiveness and robustness of the PGMRES method discussed in Section \ref{Section:tau}.

\begin{example}
	In the two-dimensional grid of $(N+1) \times (N+1)$, instantaneous solute release point sources are set at the internal grid points. The initial concentration $\psi$ of these point sources satisfies $\psi=\psi(x_1,x_2)=10^4 x_1^4(1-x_1)^4 x_2^4(1-x_2)^4$, and the initial concentration of other boundary points is $0$. We consider diffusion coefficients $\kappa_1=\kappa_2=100$ and a variable coefficient $e(x_1,x_2,t)=(x_1^2 + x_2^2 + e^{-t})/100$. The exact solution is $u(x_1,x_2,t)=10^4 e^{-t} x_1^4(1-x_1)^4 x_2^4(1-x_2)^4$, $(x_1,x_2,t)\in[0,1]^2\times[0,1]$. 
 \label{example2}
\end{example}

\begin{table}[htbp]
\tabcolsep=7.0pt \label{table3}
\centering
		\caption{Comparisons for different $\alpha_1$ and $\alpha_2$ by one-sided and two-sided PGMRES solvers in two dimensions.}
		\begin{tabular}{c|c|c|ccccccc}
			\hline \multirow{2}{*}{$(\alpha_1,\ \alpha_2)$} & \multirow{2}{*}{$M$} & \multirow{2}{*}{$N + 1$} & \multirow{2}{*}{Error} & \multicolumn{2}{c}{SPGM} & \multicolumn{2}{c}{OPGM} & \multicolumn{2}{c}{TPGM}\\
			\cline { 5 - 10 }  & &  & & CPU & Iter & CPU & Iter & CPU & \multicolumn{1}{c}{Iter} \\
			\hline 
			$(1.10,\ 1.30)$ & $2^{12}$ & $2^4$ & 2.10e-5 & 8.25e+0 & 18.4 & 5.56e+0 & 8.0 & 7.52e+0 & 8.0  \\
                 & & $2^5$ & 1.23e-6 & 3.13e+1 & 22.0 & 1.47e+1 & 8.0 & 1.99e+1 & 8.6 \\
                 & & $2^6$ & 7.38e-8 & 8.89e+1 & 26.0 & 4.14e+1 & 8.5 &  5.09e+1 & 9.0 \\
		\cline{2-10}
                   & $2^{14}$& $2^4$ & 1.70e-5 & 3.53e+1 & 18.5 & 2.28e+1 & 8.2 & 3.18e+1 & 8.4 \\
                   & & $2^5$ & 1.01e-6 & 1.25e+2 & 22.1 & 6.23e+1 & 8.6 & 7.97e+1 & 8.8 \\
                   & & $2^6$ & 6.20e-8 & 3.71e+2 & 26.0 & 1.66e+2 & 9.0 & 2.24e+2 & 9.1 \\
                   & & $2^7$ & 3.89e-9 & 3.59e+3 & 31.0 & 8.54e+2 & 9.0 & 9.08e+2 & 9.4 \\
            \hline
                  $(1.30,\ 1.50)$ & $2^{12}$& $2^4$ & 2.80e-5 & 8.56e+0 & 19.0 & 5.25e+0 & 7.3 & 7.60e+0 & 8.0 \\
                   & & $2^5$ & 1.62e-6 & 3.43e+1 & 24.0 & 1.43e+1 & 8.0 & 1.89e+1 & 8.0\\
                   & & $2^6$ & 9.49e-8 & 9.61e+1 & 28.0 & 3.92e+1 & 8.0 & 5.27e+1 & 8.8 \\
		\cline{2-10}
                   & $2^{14}$& $2^4$ & 2.34e-5 & 3.62e+1 & 19.3 & 2.21e+1 & 7.7 &  2.97e+1  & 7.9 \\
                   && $2^5$ & 1.37e-6 & 1.35e+2 & 24.0 & 5.96e+1 & 8.0 &  7.83e+1 & 8.4  \\
                   && $2^6$ & 8.12e-8 & 3.94e+2 & 28.1 & 1.59e+2 & 8.2 & 2.09e+2 & 8.7 \\
                   && $2^7$ & 4.92e-9 & 3.70e+3 & 34.0 & 7.26e+2 & 8.6 & 9.12e+2 & 9.0 \\
            \hline
            $(1.70,\ 1.90)$ & $2^{12}$& $2^4$ & 5.00e-5 & 9.40e+0 & 21.0 &  4.66e+0 & 6.0 & 6.39e+0 & 6.0\\
                   && $2^5$ & 3.03e-6 & 3.64e+1 & 26.0 & 1.25e+1 & 6.0 & 1.70e+1 & 7.0\\
                   && $2^6$ &  1.83e-7 & 1.11e+2 & 32.0 & 3.20e+1 & 6.0 & 4.86e+1 & 8.0 \\
		\cline{2-10}
                   &$2^{14}$& $2^4$ & 4.38e-5 & 3.91e+1 & 21.1 & 1.94e+1 & 6.3 & 2.70e+1 & 6.7\\
                   && $2^5$ & 2.66e-6 & 1.51e+2 & 26.9 & 5.18e+1 & 6.6 & 7.03e+1 & 7.0\\
                   && $2^6$ & 1.61e-7 & 4.34e+2 & 32.0 & 1.60e+2 & 6.9 & 1.86e+2 & 7.0\\
                    && $2^7$ & 9.69e-9 & 3.25e+3 & 40.6 & 8.45e+2 & 7.0 & 1.31e+3 & 7.0\\
   \hline
	\end{tabular}
\end{table}

\begin{figure}[htbp]\label{fig2}
	\centering
	\begin{minipage}{0.49\linewidth}
		\centering
		\includegraphics[width=1.0\linewidth]{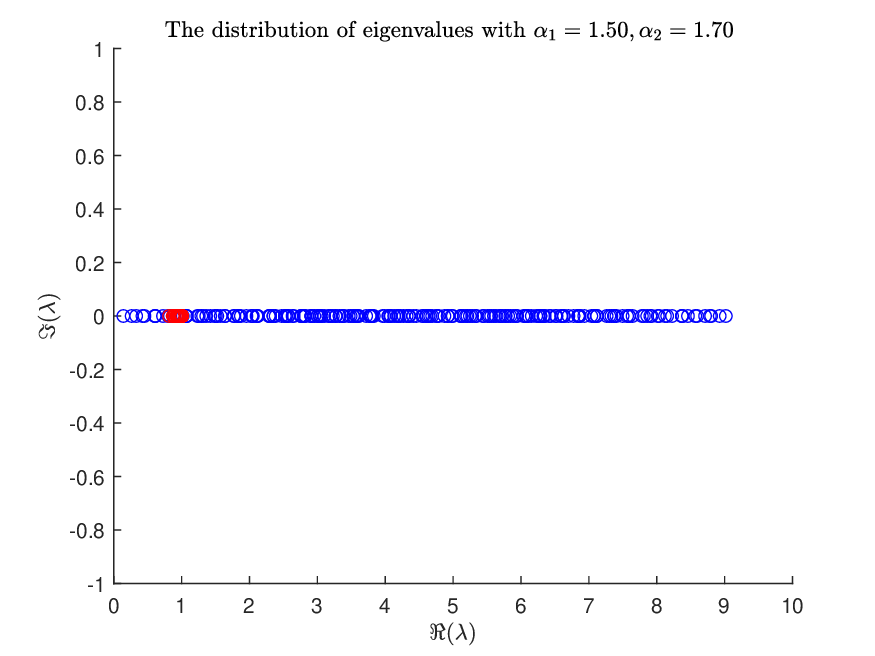}
	\end{minipage}
	\begin{minipage}{0.49\linewidth}
		\centering
		\includegraphics[width=1.0\linewidth]{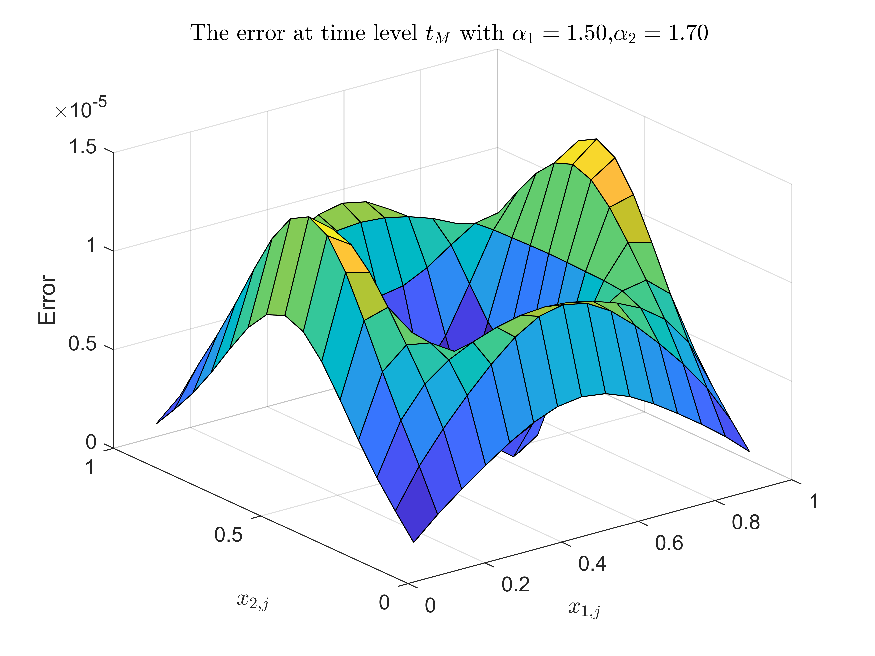}
	\end{minipage}
 \caption{The two-dimensional  results when $N+1 = 2^4$ and $M=2^{12}$.}
\end{figure}

In Example \ref{example2} of the two-dimensional problem, our computed results agree closely with those shown in Example \ref{example1}, as demonstrated in Table \ref{table3}. The results of Table \ref{table3} reveal that CPU time and the number of iterations of TPGM perform well, echoing the findings in Table \ref{table2}. Furthermore, the left side of Fig. \ref{fig2} presents a comparison of eigenvalue distributions between the matrices $\tilde{A}_{\alpha}^{(m)}$ and $P_{\alpha}^{-1}\tilde{A}_{\alpha}^{(m)}$, confirming the conclusions drawn in Fig. \ref{fig1}. The figure on the right depicts the error change at time layer $t_M$ when $\alpha_1=1.50$ and $\alpha_2=1.70$, effectively showing the strong correspondence between the exact solution and the numerical solution, which presents the benefit of the preconditioning technology we designed. Given the significant contrast between the results of SPGM and OPGM in Tables \ref{table2}-\ref{table3}, OPGM has proven to be the optimal choice for these cases. However, we will not reiterate this conclusion in the Example \ref{example3}.

\begin{example}
	In the three-dimensional grid of $(N+1) \times (N+1) \times (N+1)$, instantaneous solute release point sources are set at the internal grid points. The initial concentration $\psi$ of these point sources satisfies $\psi=\psi(x_1,x_2,x_3)=10^8 x_1^4(1-x_1)^4 x_2^4(1-x_2)^4 x_3^4(1-x_3)^4$, and the initial concentration of other boundary points is $0$. We consider the diffusion coefficients $\kappa_1=100,\kappa_2=85,\kappa_3=103$ and a variable coefficient $e(x_1,x_2,x_3,t)=(x_1^2 + x_2^2 + x_3^2+ e^{-t})/100$. The exact solution is $u(x_1,x_2,x_3,t)=10^8 e^{-t} x_1^4(1-x_1)^4 x_2^4(1-x_2)^4 x_2^4(1-x_3)^4$, $(x_1,x_2,x_3,t)\in[0,1]^3\times[0,1]$.   
 \label{example3}
\end{example}

\begin{table}[htbp]
\tabcolsep=9.0pt
\centering
		\caption{Comparisons for different $\alpha_1$, $\alpha_2$ and $\alpha_3$ by one-sided and two-sided PGMRES solvers in three dimensions.} 
		\begin{tabular}{c|c|c|ccccc}
			\hline \multirow{2}{*}{$(\alpha_1,\ \alpha_2,\ \alpha_3)$} & \multirow{2}{*}{$M$} & \multirow{2}{*}{$N + 1$} & \multirow{2}{*}{Error} & \multicolumn{2}{c}{OPGM} & \multicolumn{2}{c}{TPGM}\\
			\cline { 5 - 8 }  & &  & & CPU & Iter & CPU & \multicolumn{1}{c}{Iter} \\
			\hline 
			$(1.10,\ 1.30,\ 1.50)$ & $2^{10}$ & $2^3$ & 1.28e-4 & 8.59e+0 & 7.6 & 9.78e+0 & 8.0\\
		&& $2^4$ & 8.25e-6 & 3.50e+1 & 8.0 & 3.95e+1 & 8.0 \\
		&& $2^5$ & 5.18e-7 & 1.84e+2 & 9.0 & 2.35e+2 & 9.0  \\
		\cline{2-8}
		&$2^{12}$& $2^3$ & 1.23e-4 & 3.38e+1 & 8.0 & 3.86e+1 & 8.0  \\
		&& $2^4$ & 8.04e-6 & 1.46e+2 & 8.3 & 1.67e+2 & 8.7 \\
		&& $2^5$ & 5.06e-7 & 8.90e+2 & 9.0 & 9.28e+2 & 9.0  \\
               \hline
            $(1.30,\ 1.50,\ 1.70)$ & $2^{10}$& $2^3$ & 1.49e-4 & 7.91e+0 & 7.0 & 9.00e+0 & 7.0 \\
		&& $2^4$ & 9.31e-6 & 3.43e+1 & 8.0 & 3.70e+1 & 8.0 \\
		&& $2^5$ & 5.81e-7 & 1.86e+2 & 8.0 & 2.07e+2 & 8.0  \\
		\cline{2-8}
		&$2^{12}$& $2^3$ & 1.45e-4 & 3.21e+1 & 7.0 & 3.60e+1 & 7.0  \\
		&& $2^4$ & 9.15e-6 & 1.41e+2 & 8.0 & 1.56e+2 & 8.0 \\
		&& $2^5$ &  5.73e-7 & 7.44e+2 & 8.0 & 7.77e+2 & 8.2  \\
           \hline
            $(1.50,\ 1.70,\ 1.90)$ & $2^{10}$& $2^3$ & 1.70e-4 & 7.07e+0 & 6.0 & 7.96e+0 & 6.0\\
		&& $2^4$ & 1.02e-5 & 3.13e+1 & 7.0 & 3.61e+1 & 7.0 \\
		&& $2^5$ & 6.34e-7 & 1.69e+2 & 7.0 & 1.93e+2 & 7.0 \\
		\cline{2-8}
		&$2^{12}$& $2^3$ & 1.67e-4 & 2.86e+1 & 6.1 & 3.46e+1 & 6.5 \\
		&& $2^4$ & 1.01e-5 & 1.29e+2 & 7.0 & 1.44e+2 & 7.0 \\
		&& $2^5$ & 6.29e-7 & 6.73e+2 & 7.0 & 9.07e+2 & 7.9 \\
   \hline
	\end{tabular}
\label{table4}
\end{table}
\begin{figure}[htbp]\label{fig3}
	\centering
	\begin{minipage}{0.49\linewidth}
		\centering
		\includegraphics[width=1.0\linewidth]{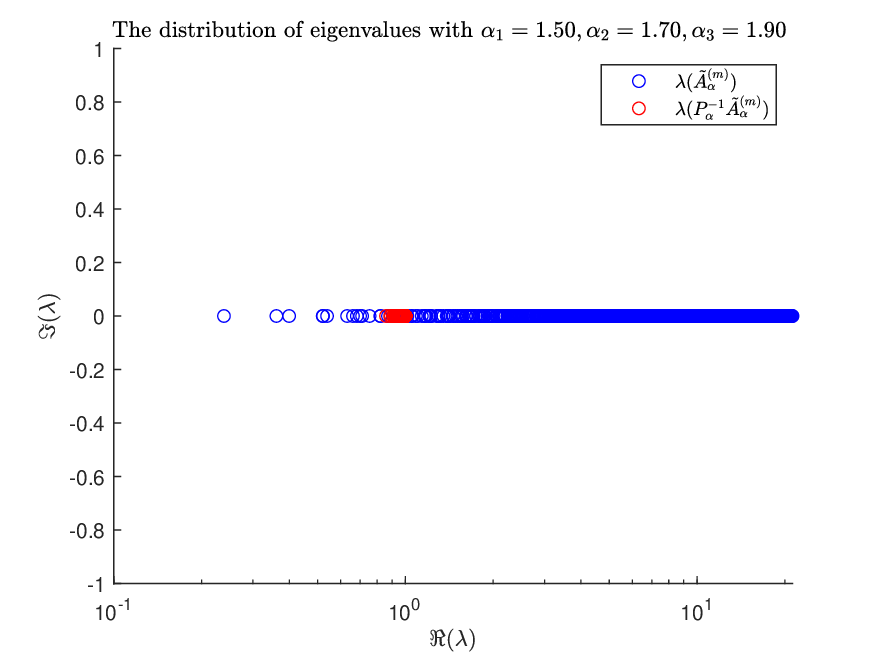}
	\end{minipage}
	\begin{minipage}{0.49\linewidth}
		\centering
		\includegraphics[width=1.0\linewidth]{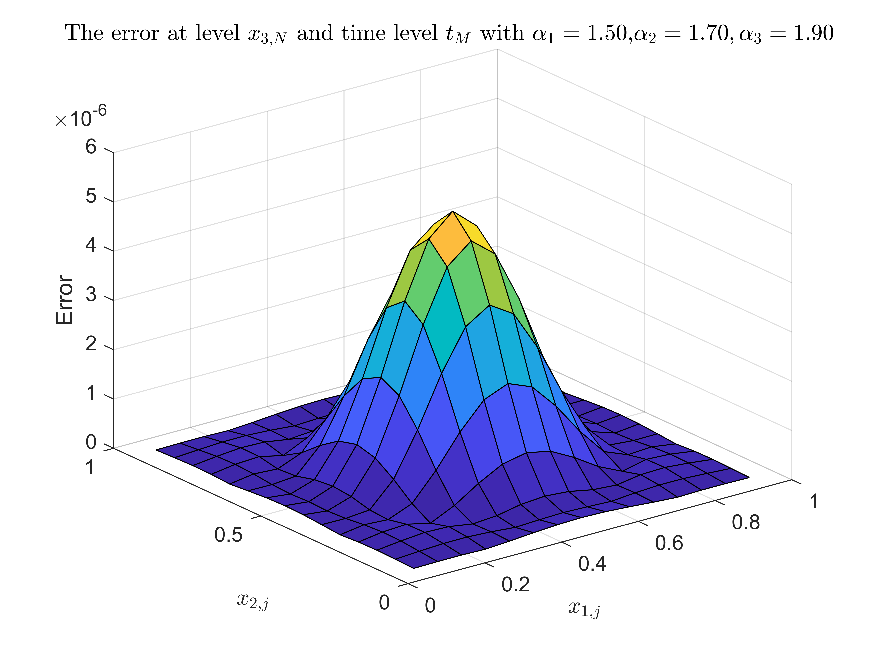}
	\end{minipage}
 \caption{ The three-dimensional  results when $N+1 = 2^4$ and $M=2^{12}$.}
\end{figure}

According to Table \ref{table4} and Fig. \ref{fig3}, the results from Example \ref{example3} align closely with the findings of the first two examples, and we do not need to discuss them in more detail here. However, during the construction of the 3D figure (i.e., Fig. \ref{fig3}), we observed roughness at the boundaries, particularly becoming apparent with increasing dimensions. The described phenomenon already occurs and is discussed in the numerical tests of the work \cite{BU2014}. Although this phenomenon is currently acknowledged by all, we aim to address this challenge in our future work with the help of numerical discretizations on locally refined meshes.

\section{Conclusions}
\label{sec-end}
We employ the quasi-compact FCD scheme to discretize the $d$-dimensional RSFDE with variable coefficients and develop a novel algorithm utilizing the characteristics of the coefficient matrix. For the $d$-dimensional problem, we introduce a preconditioner based on the sine transform. Theoretical analysis confirms that the upper bound of the relative residual norm using the GMRES method with our proposed preconditioner is independent of the grid size and dimension. As a result, the PGMRES method achieves linear convergence. Numerical simulations prove its efficiency and demonstrate its advantages. 

As a further step, we could consider a precise analysis of the distributional spectral and singular value results of the underlying matrix-sequences and this could benefit from the tools of generalized locally Toeplitz $*$-algebras of matrix-sequences \cite{GLT-I-book,GLT-II-book}. Moreover, for high-dimensional models, i.e., $d\geq 4$, we will consider to use Tucker decomposition to accelerate the solutions of the discretized system in tensor format, in combination with multi-iterative solvers when the variable coefficients are highly oscillating or with jumps.
%
\section*{Appendix A}
\label{appA}
In the following, for problems in $d$ dimensions (where $d=1, 2, 3$), regarding the vector $\mathbf{F}^{(m)}$ in \eqref{eq2.1}, its definition varies according to the dimensionality of the problem.
\begin{equation*}
\begin{aligned}
&\text{One-dimensional case:}\ \mathbf{F}^{(m)} = B_{\alpha}\mathbf{f}^{(m)} + \dfrac{\alpha_1}{24}\mathbf{f}_0^{(m)}.\\
&\text{Two-dimensional case:}\ \mathbf{F}^{(m)} = B_{\alpha}\mathbf{f}^{(m)}
+ \left(B_{\alpha_2}\otimes \frac{\alpha_1 I_{N}}{24}\right) \mathbf{f}_1^{(m)}+\frac{\alpha_2}{24}
\begin{bmatrix}
              B_{\alpha_1}\mathbf{g}_1^{(m)}\\
              \mathbf{0}\\
              B_{\alpha_1} \mathbf{g}_2^{(m)}
        \end{bmatrix}
+\frac{\alpha_1 \alpha_2}{24^2}
 \begin{bmatrix}
              \mathbf{g}_3^{(m)}\\
              \mathbf{0}\\
             \mathbf{g}_4^{(m)}
\end{bmatrix}. \\
&\text{Three-dimensional case:}\ \mathbf{F}^{(m)}= B_{\alpha}\mathbf{f}^{(m+1/2)} + \left(B_{\alpha_3} \otimes B_{\alpha_2} \otimes \frac{\alpha_1 I_{N}}{24}\right) \mathbf{f}_2^{(m)} + \left(B_{\alpha_3} \otimes \frac{\alpha_2 I_{N}}{24} \otimes B_{\alpha_1}\right) \mathbf{f}_3^{(m)}\\
&  + \left(B_{\alpha_3} \otimes \frac{\alpha_2 I_{N}}{24} \otimes \frac{\alpha_1 I_{N}}{24}\right) \mathbf{f}_4^{(m)}
+ \frac{\alpha_3}{24}
\begin{bmatrix}
              (B_{\alpha_2} \otimes B_{\alpha_1})\mathbf{g}_5^{(m)}\\
              \mathbf{0}\\
              (B_{\alpha_2} \otimes B_{\alpha_1}) \mathbf{g}_6^{(m)}\\
        \end{bmatrix} \\
&+\frac{\alpha_3}{24}
\begin{bmatrix}
              (B_{\alpha_2} \otimes \frac{\alpha_1 I_{N}}{24})\mathbf{g}_7^{(m)}\\
              \mathbf{0}\\
              (B_{\alpha_2} \otimes \frac{\alpha_1 I_{N}}{24}) \mathbf{g}_8^{(m)}\\
        \end{bmatrix} +\frac{\alpha_3}{24}
\begin{bmatrix}
              (\frac{\alpha_2 I_{N}}{24} \otimes B_{\alpha_1})\mathbf{g}_{9}^{(m)}\\
              \mathbf{0}\\
              (\frac{\alpha_2 I_{N}}{24} \otimes B_{\alpha_1}) \mathbf{g}_{10}^{(m)}\\
        \end{bmatrix} +\frac{\alpha_3\alpha_2\alpha_1}{24^3}{\mathbf{f}_5^{(m)}}.
        %
\end{aligned} 
\end{equation*} 
%
%
      %
In the above vector $\mathbf{F}^{(m)}$, the definitions of the relevant vectors are as follows.
\begin{equation*}
\begin{aligned}
\mathbf{f}^{(m)} =& f(\mathbb{G},t_{m}) \\
\mathbf{f}_0^{(m)}=&[f(L_1,t_{m}),\underbrace{0,\ldots,0}_{N-2},f(R_1,t_{m})]^\top,\\
\mathbf{f}_1^{(m)}=&\big{[}f(x_{1,0},x_{2,1},t_m), \underbrace{0,\cdots,0}_{N-2},f(x_{1,N+1},x_{2,1},t_m),
f(x_{1,0},x_{2,2},t_m),\underbrace{0,\cdots,0}_{N-2},f(x_{1,N+1},x_{2,2},t_m),\\
& \cdots,
f(x_{1,0},x_{2,N},t_m),\underbrace{0,\cdots,0}_{N-2}, f(x_{1,N+1},x_{2,N},t_m)\big{]}^{\top},\\
\mathbf{f}_2^{(m)}=&\big{[}f(x_{1,0},x_{2,1},x_{3,1},t_m), \underbrace{0,\cdots,0}_{N-2},f(x_{1,N+1},x_{2,1},x_{3,1},t_m),
f(x_{1,0},x_{2,2},x_{3,1},t_m),\underbrace{0,\cdots,0}_{N-2},\\
& f(x_{1,N+1},x_{2,2},x_{3,1},t_m), \cdots,f(x_{1,0},x_{2,N},x_{3,1},t_m),\underbrace{0,\cdots,0}_{N-2},
f(x_{1,N+1},x_{2,N},x_{3,1},t_m),\\
&\cdots,f(x_{1,0},x_{2,1},x_{3,N},t_m), \underbrace{0,\cdots,0}_{N-2},f(x_{1,N+1},x_{2,1},x_{3,N},t_m),
f(x_{1,0},x_{2,2},x_{3,N},t_m),\underbrace{0,\cdots,0}_{N-2},\\
& f(x_{1,N+1},x_{2,2},x_{3,N},t_m), \cdots,f(x_{1,0},x_{2,N},x_{3,N},t_m),\underbrace{0,\cdots,0}_{N-2},
f(x_{1,N+1},x_{2,N},x_{3,N},t_m)\big{]}^{\top},\\
\end{aligned} 
\end{equation*} 
\begin{equation*}
\begin{aligned}
\mathbf{f}_3^{(m)}=&\big{[}f(x_{1,1},x_{2,0},x_{3,1},t_m),\cdots,f(x_{1,N},x_{2,0},x_{3,1},t_m),
\underbrace{0,\cdots,0}_{(N-2)N},f(x_{1,1},x_{2,N+1},x_{3,1},t_m),\cdots,\\
&f(x_{1,N},x_{2,N+1},x_{3,1},t_m),f(x_{1,1},x_{2,0},x_{3,2},t_m),\cdots,f(x_{1,N},x_{2,0},x_{3,2},t_m),
\underbrace{0,\cdots,0}_{(N-2)N},\\
&f(x_{1,1},x_{2,N+1},x_{3,2},t_m),\cdots,f(x_{1,N},x_{2,N+1},x_{3,2},t_m),\cdots,f(x_{1,1},x_{2,0},x_{3,N},t_m),\\
&\cdots,f(x_{1,N},x_{2,0},x_{3,N},t_m),
\underbrace{0,\cdots,0}_{(N-2)N},f(x_{1,1},x_{2,N+1},x_{3,N}, t_m),\cdots, f(x_{1,N},x_{2,N + 1},x_{3,N}, t_m)\big{]}^{\top},\\
\mathbf{f}_4^{(m)}=&\big{[}f(x_{1,0},x_{2,0},x_{3,1},t_m),\underbrace{0,\cdots,0}_{N-2},f(x_{1,N+1},x_{2,0},x_{3,1},t_m),
\underbrace{0,\cdots,0}_{(N-2)N},f(x_{1,1},x_{2,N+1},x_{3,1},t_m),\\
&\underbrace{0,\cdots,0}_{N-2}, f(x_{1,N+1},x_{2,N+1},x_{3,1},t_m),\cdots,f(x_{1,0},x_{2,0},x_{3,N},t_m),\underbrace{0,\cdots,0}_{N-2},f(x_{1,N+1},x_{2,0},x_{3,N},t_m),\\
&\underbrace{0,\cdots,0}_{(N-2)N},f(x_{1,1},x_{2,N+1},x_{3,N},t_m),\underbrace{0,\cdots,0}_{N-2}, f(x_{1,N+1},x_{2,N+1},x_{3,N},t_m),\big{]}^{\top},\\
\mathbf{f}_{5}^{(m)}=&\big{[}f(x_{1,0},x_{2,0},x_{3,0},t_m),\underbrace{0,\cdots,0}_{N-2},f(x_{1,N+1},x_{2,0},x_{3,0},t_m),\underbrace{0,\cdots,0}_{(N-2)N},
f(x_{1,0},x_{2,N+1},x_{3,0},t_m),\underbrace{0,\cdots,0}_{N-2},\\
&f(x_{1,N+1},x_{2,N+1},x_{3,0},t_m),\underbrace{0,\cdots,0}_{(N-2)N^2},f(x_{1,0},x_{2,0},x_{3,N+1},t_m),\underbrace{0,\cdots,0}_{N-2},f(x_{1,N+1},x_{2,0},x_{3,N+1},t_m),\\
&\underbrace{0,\cdots,0}_{(N-2)N},f(x_{1,0},x_{2,N+1},x_{3,N+1},t_m),\underbrace{0,\cdots,0}_{N-2},f(x_{1,N+1},x_{2,N+1},x_{3,N+1},t_m)\big{]}^{\top}
\end{aligned}
\end{equation*}

and
\begin{equation*}
\begin{aligned}
\mathbf{g}_1^{(m)}=&\big{[}f(x_{1,1},x_{2,0},t_m),f(x_{1,2},x_{2,0},t_m),\cdots,f(x_{1,N},x_{2,0},t_m)\big{]}^{\top},\\
\mathbf{g}_2^{(m)}=&\big{[}f(x_{1,1},x_{2,N+1},t_m),f(x_{1,2},x_{2,N+1},t_m),\cdots,f(x_{1,N},x_{2,N+1},t_m)\big{]}^{\top},\\
\mathbf{g}_3^{(m)}=&\big{[}f(x_{1,0},x_{2,0},t_m),\underbrace{0,\cdots,0}_{N-2},f(x_{1,N+1},x_{2,0},t_m)\big{]}^{\top},\\
\mathbf{g}_4^{(m)}=&\big{[}f(x_{1,0},x_{2,N+1},t_m),\underbrace{0,\cdots,0}_{N-2},f(x_{1,N+1},x_{2,N+1},t_m)\big{]}^{\top},\\
\mathbf{g}_5^{(m)}=&\big{[}f(x_{1,1},x_{2,1},x_{3,0},t_m),\cdots,f(x_{1,N},x_{2,1},x_{3,0},t_m),f(x_{1,1},x_{2,2},x_{3,0},t_m),\cdots,f(x_{1,N},x_{2,2},x_{3,0},t_m),\\
&\cdots,f(x_{1,1},x_{2,N},x_{3,0},t_m),\cdots,f(x_{1,N},x_{2,N},x_{3,0},t_m)\big{]}^{\top},\\
\mathbf{g}_6^{(m)}=&\big{[}f(x_{1,1},x_{2,1},x_{3,N+1},t_m),\cdots,f(x_{1,N},x_{2,1},x_{3,N+1},t_m),f(x_{1,1},x_{2,2},x_{3,N+1},t_m),\cdots,\\
&f(x_{1,N},x_{2,2},x_{3,N+1},t_m),\cdots,f(x_{1,1},x_{2,N},x_{3,N+1},t_m),\cdots,f(x_{1,N},x_{2,N},x_{3,N+1},t_m)\big{]}^{\top},\\
\mathbf{g}_7^{(m)}=&\big{[}f(x_{1,0},x_{2,1},x_{3,0},t_m),\underbrace{0,\cdots,0}_{N-2},f(x_{1,N+1},x_{2,1},x_{3,0},t_m),f(x_{1,0},x_{2,2},x_{3,0},t_m),\underbrace{0,\cdots,0}_{N-2},\\
&f(x_{1,N+1},x_{2,2},x_{3,0},t_m),\cdots, f(x_{1,0},x_{2,N},x_{3,0},t_m),\underbrace{0,\cdots,0}_{N-2},f(x_{1,N+1},x_{2,N},x_{3,0},t_m)\big{]}^{\top},
\end{aligned}
\end{equation*}
\begin{equation*}
\begin{aligned}
\mathbf{g}_8^{(m)}=&\big{[}f(x_{1,0},x_{2,1},x_{3,N+1},t_m),\underbrace{0,\cdots,0}_{N-2},f(x_{1,N+1},x_{2,1},x_{3,N+1},t_m),f(x_{1,0},x_{2,2},x_{3,N+1},t_m),\underbrace{0,\cdots,0}_{N-2}, \\
&f(x_{1,N+1},x_{2,2},x_{3,N+1},t_m),\cdots,
f(x_{1,0},x_{2,N},x_{3,N+1},t_m),\underbrace{0,\cdots,0}_{N-2},f(x_{1,N+1},x_{2,N},x_{3,N+1},t_m)\big{]}^{\top},\\
\mathbf{g}_{9}^{(m)}=&\big{[}f(x_{1,1},x_{2,0},x_{3,0},t_m),\cdots,f(x_{1,N},x_{2,0},x_{3,0},t_m),\underbrace{0,\cdots,0}_{(N-2)N},
f(x_{1,1},x_{2,N+1},x_{3,0},t_m),\cdots, \\
&f(x_{1,N},x_{2,N+1},x_{3,0},t_m)\big{]}^{\top},\\
\mathbf{g}_{10}^{(m)}=&\big{[}f(x_{1,1},x_{2,0},x_{3,N+1},t_m),\cdots,f(x_{1,N},x_{2,0},x_{3,N+1},t_m),\underbrace{0,\cdots,0}_{(N-2)N},
f(x_{1,1},x_{2,N+1},x_{3,N+1},t_m),\\
&\cdots,f(x_{1,N},x_{2,N+1},x_{3,N+1},t_m)\big{]}^{\top}.\\
\end{aligned}
\end{equation*}

\section*{Appendix B}\label{appB}

At the discrete point $(x_{1,j},t_{m+1/2})$, the one-dimensional case of Eq. \eqref{RSFDE} is as follows:
\begin{equation}\label{eqB1}\tag{B1}
    e(x_{1,j_1},t_{m+1/2}) \partial_t u(x_{1,j_1},t_{m+1/2}) = \kappa_1 \partial^{\alpha_1}_{x_1} u(x_{1,j_1},t_{m+1/2})+f(x_{1,j_1},t_{m+1/2}),\ j_1 \in \Upsilon,\ 0 \leq m \leq M-1.
\end{equation}

 Taylor expansion gives
\begin{equation}\label{eqB2}\tag{B2}
e_{j_1}^{m+1/2} \delta_t U_{j_1}^{m+1/2} = \frac{1}{2}\kappa_1 \partial_{x_1}^{\alpha_1} \left(U_{j_1}^{m+1} +  U_{j_1}^{m}\right) + f_{j_1}^{m+1/2} + R_{j_1}^m,\ j \in \Upsilon,\ 0 \leq m \leq M-1,
\end{equation}
where 
\begin{equation}\notag
\delta_t U_j^{m+1/2}=\frac{u\left(x_{1, j}, t_{m+1}\right)-u\left(x_{1, j}, t_{m}\right)}{\Delta t}+\mathcal{O}\left(\Delta t^2\right),
\end{equation}
and truncation error $R_{j_1}^m$ satisfies 
$\left|R_{j_1}^m\right| \leqslant c\left(\Delta t^2\right)$ for a positive constant $c$ independent of $\Delta t$ and $h_1$.

Applying the compact operator $\mathscr{H}_{\alpha_1}$ to both sides of Eq. \eqref{eqB2}, we obtain
\begin{equation}\label{eqB3}\tag{B3}
\mathscr{H}_{\alpha_1}\left(e_{j_1}^{m+1/2} \delta_t U_{j_1}^{m+1/2}\right) = \frac{1}{2}\kappa_1 \delta_{x_1}^{\alpha_1} \left(U_{j_1}^{m+1} +  U_{j_1}^{m}\right) + \mathscr{H}_{\alpha_1}f_{j_1}^{m+1/2} + Q_{j_1}^m,
\end{equation}
where 
\begin{equation}
\begin{aligned}\notag
\mathscr{H}_{\alpha_1}\left(e_{j_1}^{m+1/2} \delta_t U_{j_1}^{m+1/2}\right) &= \frac{\alpha_1}{24} e_{j_1-1}^{m+1/2} \delta_t U_{j_1-1}^{m+1/2}+\left(1-\frac{\alpha_1}{12}\right) e_{j_1}^{m+1/2} \delta_t U_{j_1}^{m+1/2}+\frac{\alpha_1}{24} e_{j_1+1}^{m+1/2} \delta_t U_{j_1+1}^{m+1/2},\\
\mathscr{H}_{\alpha_1}f_{j_1}^{m+1/2} &= \frac{\alpha_1}{24} f_{j_1-1}^{m+1/2}+\left(1-\frac{\alpha_1}{12}\right) f_{j_1}^{m+1/2}+\frac{\alpha_1}{24} f_{j_1+1}^{m+1/2},\\
\delta_{x_1}^{\alpha_1} U_{j_1}^{m+1} &= \mathscr{H}_{\alpha_1}\partial^{\alpha_1}_{x_1} U_{j_1}^{m+1}+\mathcal{O}\left(h_1^4\right)
\end{aligned}  
\end{equation}
with $\left|Q_{j_1}^m\right| \leqslant c\left(\Delta t^2+h_1^4\right)$.

Eliminating the small terms $Q_{j_1}^m$ and replacing the exact solution $U_{j_1}^m$ with its numerical approximation $u_{j_1}^m$, we obtain the quasi-compact difference scheme:
\begin{equation}\label{eqB4}\tag{B4}
\mathscr{H}_{\alpha_1}\left(e_{j_1}^{m+1/2} \delta_t u_{j_1}^{m+1/2}\right) = \frac{1}{2}\kappa_1 \delta_{x_1}^{\alpha_1} \left(u_{j_1}^{m+1} +  u_{j_1}^{m}\right) + \mathscr{H}_{\alpha_1}f_{j_1}^{m+1/2} \ (\text{one-dimensional case of Eq. \eqref{eq2.2}}).
\end{equation}

From this discrete scheme, the corresponding matrix-vector form is immediately:
\begin{equation*}
\begin{aligned}
&\left[\begin{array}{ccccc}
1-\frac{\alpha_1}{12} & \frac{\alpha_1}{24} & & & \\
\frac{\alpha_1}{24} & 1-\frac{\alpha_1}{1 2} & \frac{\alpha_1}{24} & & \\
& \ddots & \ddots & \ddots \\
& & \frac{\alpha_1}{24} & 1-\frac{\alpha_1}{12} & \frac{\alpha_1}{24} \\
& & & \frac{\alpha_1}{2 4} & 1-\frac{\alpha_1}{12}
\end{array}\right]\left[\begin{array}{cccc}
e_1^{m+1/2} & & & \\
& e_2^{m+1/2} & & \\
& & \ddots & \\
& & & e_N^{m+1/2}
\end{array}\right]\left(\mathbf{u}^{(m+1)}-\mathbf{u}^{(m)}\right)=\\
&\frac{-\kappa_1 \Delta t}{2 h_1^{\alpha_1}} \left[\begin{array}{ccccc}
s_0^{\left(\alpha_1\right)} & s_1^{\left(\alpha_1\right)} & \ldots & s_{N-2}^{\left(\alpha_1\right)} & s_{N-1}^{\left(\alpha_1\right)} \\
s_1^{\left(\alpha_1\right)} & s_0^{\left(\alpha_1\right)} & s_1^{\left(\alpha_1\right)} & \ldots & s_{N-2}^{\left(\alpha_1\right)} \\
\vdots & \ddots & \ddots & \ddots & \vdots \\
s_{N-2}^{\left(\alpha_1\right)} & \ddots & \ddots & \ddots & s_1^{\left(\alpha_1\right)} \\
s_{N-1}^{\left(\alpha_1\right)} & s_{N-2}^{\left(\alpha_1\right)} & \ldots & s_1^{\left(\alpha_1\right)} & s_0^{\left(\alpha_1\right)}
\end{array}\right]\left(\mathbf{u}^{(m+1)}+\mathbf{u}^{(m)}\right) + \Delta t \mathbf{F}^{(m+1/2)},\\ &(\mathbf{F}^{(m+1/2)}\ \text{given by \nameref{appA})}
\end{aligned}
\end{equation*}
which agrees with Eq. \eqref{tls}.

\section*{Acknowledgments}
The authors would like to thank Dr. Xin Huang for her help to improve our code of Example 4.3. X.-M. Gu wants to especially thank Prof. Cornelis W. Oosterlee at Utrecht University for reading the draft of our manuscript and providing many helpful suggestions. 
\section*{Funding}
This work of Z.-H. She is supported by the National Natural Science Foundation of China (Grant No. 12301481), Guangdong Province Humanities and Social Sciences Planning Project (Grant No. GD25YYJ39). The work of X.-M. Gu is supported by the National Natural Science Foundation of China (Grant No. 72431008). The work of S. Serra-Capizzano is funded from the European High-Performance Computing Joint Undertaking (JU) under grant agreement No. 955701. The JU receives support from the European Union’s Horizon 2020 research and innovation programme and Belgium, France, Germany, Switzerland.
Furthermore S. Serra-Capizzano is grateful for the support of the Italian GNCS agency and of the Laboratory of Theory, Economics and Systems – Department of Computer Science at Athens University of Economics and Business.
\section*{Contributions}
Conceptualization: ZHS, XZ, XMG. Formal Analysis: ZHS, XZ, XMG, SSC. Methodology: ZHS, XZ, XMG, SSC. Investigation: ZHS, XZ, XMG, SSC. Software: ZHS, XZ, XMG. Writing: ZHS, XZ, XMG. Review: ZHS, XMG, SSC. Project administration: ZHS, XMG, SSC.
\section*{Declarations}
\subsection*{Conflict of interest}
The authors declare that they have no conflict of interest.

\bibliographystyle{siamplain}
\bibliography{refX}

\begin{thebibliography}{10}

\bibitem{Aceto2023}
{\sc L.~Aceto and M.~Mazza}, {\em A rational preconditioner for multi-dimensional {Riesz} fractional diffusion equations}, Comput. Math. Appl., 143 (2023), pp.~124--140, \url{https://doi.org/10.1016/j.camwa.2023.05.016}.

\bibitem{Aghdam}
{\sc Y.~E. Aghdam, H.~Mesgrani, M.~Javidi, and O.~Nikan}, {\em A computational approach for the space-time fractional advection-diffusion equation arising in contaminant transport through porous media}, Eng. Comput., 37 (2021), pp.~3615--3627, \url{https://doi.org/10.1007/s00366-020-01021-y}.

\bibitem{BES-NLA-22}
{\sc N.~Barakitis, S.-E. Ekstr\"{o}m, and P.~Vassalos}, {\em Preconditioners for fractional diffusion equations based on the spectral symbol}, Numer. Linear Algebra Appl., 29 (2022), p.~e2441, \url{https://doi.org/10.1002/nla.2441}.

\bibitem{BC-LAA-83}
{\sc D.~Bini and M.~Capovani}, {\em Spectral and computational properties of band symmetric {Toeplitz} matrices}, Linear Algebra Appl., 52-53 (1983), pp.~99--126, \url{https://doi.org/10.1016/0024-3795(83)80009-3}.

\bibitem{Bini90}
{\sc D.~Bini and F.~Di-Benedetto}, {\em A new preconditioner for the parallel solution of positive definite {Toeplitz} systems}, in Proceedings of the Second Annual ACM Symposium on Parallel Algorithms and Architectures, SPAA '90, New York, NY, 1990, ACM, pp.~220--223, \url{https://doi.org/10.1145/97444.97688}.

\bibitem{Breiten2016}
{\sc T.~Breiten, V.~Simoncini, and M.~Stoll}, {\em Low-rank solvers for fractional differential equations}, Electron. Trans. Numer. Anal., 45 (2016), pp.~107--132, \url{https://doi.org/10.17617/2.2270973}.

\bibitem{BU2014}
{\sc W.~Bu, Y.~Tang, and J.~Yang}, {\em Galerkin finite element method for two-dimensional {Riesz} space fractional diffusion equations}, J. Comput. Phys., 276 (2014), pp.~26--38, \url{https://doi.org/10.1016/j.jcp.2014.07.023}.

\bibitem{Celik}
{\sc C.~\c{C}elik and M.~Duman}, {\em {Crank-Nicolson} method for the fractional diffusion equation with the {Riesz} fractional derivative}, J. Comput. Phys., 231 (2012), pp.~1743--1750, \url{https://doi.org/10.1016/j.jcp.2011.11.008}.

\bibitem{Chan-IMA-91}
{\sc R.~Chan}, {\em Toeplitz preconditioners for {Toeplitz} systems with nonnegative generating functions}, IMA J. Numer. Anal., 11 (1991), pp.~333--345, \url{https://doi.org/10.1093/imanum/11.3.333}.

\bibitem{CCS-SISC-98}
{\sc R.~Chan, Q.-S. Chang, and H.-W. Sun}, {\em Multigrid method for ill-conditioned symmetric {Toeplitz} systems}, SIAM J. Sci. Comput., 19 (1998), pp.~516--529, \url{https://doi.org/10.1137/S106482759529383}.

\bibitem{CN-SIREV-96}
{\sc R.~Chan and M.~Ng}, {\em Conjugate gradient method for {Toeplitz} systems}, SIAM Rev., 38 (1996), pp.~427--482, \url{https://doi.org/10.1137/S0036144594276474}.

\bibitem{DiBe-SISC-95}
{\sc F.~Di-Benedetto}, {\em Analysis of preconditioning techniques for ill-conditioned {Toeplitz} matrices}, SIAM J. Sci. Comput., 16 (1995), pp.~682--697, \url{https://doi.org/10.1137/0916041}.

\bibitem{DS-NumMath-99}
{\sc F.~Di~Benedetto and S.~Serra-Capizzano}, {\em A unifying approach to abstract matrix algebra preconditioning}, Numer. Math., 82 (1999), pp.~57--90, \url{https://doi.org/10.1007/s002110050411}.

\bibitem{DMS-JCP-16}
{\sc M.~Donatelli, M.~Mazza, and S.~Serra-Capizzano}, {\em Spectral analysis and structure preserving preconditioners for fractional diffusion equations}, J. Comput. Phys., 307 (2016), pp.~262--279, \url{https://doi.org/10.1016/j.jcp.2015.11.061}.

\bibitem{DMS-SISC-18}
{\sc M.~Donatelli, M.~Mazza, and S.~Serra-Capizzano}, {\em Spectral analysis and multigrid methods for finite volume approximations of space-fractional diffusion equations}, SIAM J. Sci. Comput., 40 (2018), pp.~A4007--A4039, \url{https://doi.org/10.1137/17M115164X}.

\bibitem{FS-SISC-96}
{\sc G.~Fiorentino and S.~Serra-Capizzano}, {\em Multigrid methods for symmetric positive definite block {Toeplitz} matrices with nonnegative generating functions}, SIAM J. Sci. Comput., 17 (1996), pp.~1068--1081, \url{https://doi.org/10.1137/S1064827594271512}.

\bibitem{GLT-I-book}
{\sc C.~Garoni and S.~Serra-Capizzano}, {\em Generalized Locally Toeplitz Sequences: Theory and Applications (Vol. I.)}, Springer, Cham, Switzerland, 2017.

\bibitem{GLT-II-book}
{\sc C.~Garoni and S.~Serra-Capizzano}, {\em Generalized Locally Toeplitz Sequences: Theory and Applications (Vol. II.)}, Springer, Cham, Switzerland, 2018.

\bibitem{GT1}
{\sc M.~Goldberg and E.~Tadmor}, {\em On the numerical radius and its applications}, Linear Algebra Appl., 42 (1982), pp.~263--284.

\bibitem{Parra}
{\sc G.~Gonz\'{a}lez-Parra, A.~J. Arenas, and B.~M. Chen-Charpentier}, {\em A fractional order epidemic model for the simulation of outbreaks of influenza {A(H1N1)}}, Math. Methods Appl. Sci., 37 (2014), pp.~2218--2226, \url{https://doi.org/10.1002/mma.2968}.

\bibitem{Gu2021}
{\sc X.-M. Gu, Y.-L. Zhao, X.-L. Zhao, B.~Carpentieri, and Y.-Y. Huang}, {\em A note on parallel preconditioning for the all-at-once solution of {Riesz} fractional diffusion equations}, Numer. Math. Theor. Meth. Appl., 14 (2021), pp.~893--919, \url{https://doi.org/10.4208/nmtma.OA-2020-0020}.

\bibitem{HuangG2006}
{\sc G.~Huang, Q.~Huang, and H.~Zhan}, {\em Evidence of one-dimensional scale dependent fractional advection-dispersion}, J. Contam. Hydrol., 85 (2006), pp.~53--71, \url{https://doi.org/10.1016/j.jconhyd.2005.12.007}.

\bibitem{HuangX2022}
{\sc X.~Huang, X.-L. Lin, M.~Ng, and H.-W. Sun}, {\em Spectral analysis for preconditioning of multi-dimensional {Riesz} fractional diffusion equations}, Numer. Math. Theory Methods Appl., 15 (2022), pp.~565--591, \url{https://doi.org/10.4208/nmtma.OA-2022-0032}.

\bibitem{Huang2025x}
{\sc Y.-Y. Huang, P.~Y. Fung, S.~Y. Hon, and X.-L. Lin}, {\em An efficient preconditioner for evolutionary partial differential equations with $\theta$-method in time discretization}, J. Sci. Comput., 103 (2025), \url{https://doi.org/10.1007/s10915-025-02862-9}.

\bibitem{Huang25y}
{\sc Y.-Y. Huang, S.~Y. Hon, L.-K. Chou, and S.-L. Lei}, {\em Optimal preconditioners for nonsymmetric multilevel {Toeplitz} systems with application to solving nonlocal evolutionary partial differential equations}, Numer. Linear Algebra Appl., 32 (2025), p.~e70025, \url{https://doi.org/10.1002/nla.70025}.

\bibitem{HUANG2026}
{\sc Y.-Y. Huang, W.~Qu, S.~Y. Hon, and S.-L. Lei}, {\em An optimal preconditioner for a high-order scheme arising from multi-dimensional {Riesz} space fractional diffusion equations with variable coefficients}, Commun. Nonlinear Sci. Numer. Simul., 156 (2026), p.~109627, \url{https://doi.org/10.1016/j.cnsns.2025.109627}.

\bibitem{HuangLei2023}
{\sc Y.-Y. Huang, W.~Qu, and S.-L. Lei}, {\em On $\tau$-preconditioner for a novel fourth-order difference scheme of two-dimensional {Riesz} space-fractional diffusion equations}, Comput. Math. Appl., 145 (2023), pp.~124--140, \url{https://doi.org/10.1016/j.camwa.2023.06.015}.

\bibitem{Laub1}
{\sc A.~J. Laub}, {\em Matrix Analysis for Scientists and Engineers}, SIAM, Philadelphia, PA, 2004.

\bibitem{LeiS2016}
{\sc S.-L. Lei, X.~Chen, and X.~Zhang}, {\em Multilevel circulant preconditioner for high-dimensional fractional diffusion equations}, East Asian J. Appl. Math., 6 (2016), pp.~109--130, \url{https://doi.org/10.4208/eajam.060815.180116a}.

\bibitem{Lei2013}
{\sc S.-L. Lei and H.-W. Sun}, {\em A circulant preconditioner for fractional diffusion equations}, J. Comput. Phys., 242 (2013), pp.~715--725, \url{https://doi.org/10.1016/j.jcp.2013.02.025}.

\bibitem{Leveque1}
{\sc R.~J. LeVeque}, {\em Finite Difference Methods for Ordinary and Partial Differential Equations}, SIAM, Philadelphia, PA, 2007.

\bibitem{Lisimax25}
{\sc C.~Li and S.~Hon}, {\em Multilevel tau preconditioners for symmetrized multilevel {Toeplitz} systems with applications to solving space fractional diffusion equations}, SIAM J. Matrix Anal. Appl., 46 (2025), pp.~487--508, \url{https://doi.org/10.1137/24M1647096}.

\bibitem{LT1}
{\sc J.~Liesen and P.~Tich\'{y}}, {\em The field of value bounds on ideal {GMRES}}, 2020, \url{https://arxiv.org/abs/1211.5969v3}.

\bibitem{Linx2023}
{\sc X.-L. Lin, J.~Dong, and S.~Hon}, {\em A single-sided all-at-once preconditioning for linear system from a non-local evolutionary equation with weakly singular kernels}, Comput. Math. Appl., 169 (2024), pp.~1--16, \url{https://doi.org/10.1016/j.camwa.2024.06.002}.

\bibitem{LinXLNG2024}
{\sc X.-L. Lin and M.~Ng}, {\em Preconditioner for space fractional diffusion equation with non-separable variable coefficients}, J. Sci. Comput., 100 (2024), \url{https://doi.org/10.1007/s10915-024-02574-6}.

\bibitem{Lu2021}
{\sc X.~Lu, Z.-W. Fang, and H.-W. Sun}, {\em Splitting preconditioning based on sine transform for time-dependent {Riesz} space fractional diffusion equations}, J. Appl. Math. Comput., 66 (2021), pp.~673--700, \url{https://doi.org/10.1007/s12190-020-01454-0}.

\bibitem{Meerschaert2006}
{\sc M.~M. Meerschaert, H.-P. Scheffler, and C.~Tadjeran}, {\em Finite difference methods for two-dimensional fractional dispersion equation}, J. Comput. Phys., 211 (2006), pp.~249--261, \url{https://doi.org/10.1016/j.jcp.2005.05.017}.

\bibitem{NSV-TCS-04}
{\sc D.~Noutsos, S.~Serra-Capizzano, and P.~Vassalos}, {\em Matrix algebra preconditioners for multilevel {Toeplitz} systems do not insure optimal convergence rate}, Theoret. Comput. Sci., 315 (2004), pp.~557--579, \url{https://doi.org/10.1016/j.tcs.2004.01.007}.

\bibitem{Noutsos}
{\sc D.~Noutsos, S.~Serra-Capizzano, and P.~Vassalos}, {\em Essential spectral equivalence via multiple step preconditioning and applications to ill conditioned {Toeplitz} matrices}, Linear Algebra Appl., 491 (2016), pp.~276--291, \url{https://doi.org/10.1016/j.laa.2015.08.021}.

\bibitem{Owolabi}
{\sc K.~M. Owolabi and A.~Atangana}, {\em Computational study of multi-species fractional reaction-diffusion system with {ABC} operator}, Chaos Solitons Fractals, 128 (2019), pp.~280--289, \url{https://doi.org/10.1016/j.chaos.2019.07.050}.

\bibitem{Paraiba}
{\sc L.~C. Paraiba and C.~A. Spadotto}, {\em Soil temperature effect in calculating attenuation and retardation factors}, Chemosphere, 48 (2002), pp.~905--912, \url{https://doi.org/10.1016/S0045-6535(02)00181-9}.

\bibitem{QPS1}
{\sc H.-H. Qu, H.-K. Pang, and H.-W. Sun}, {\em Sine transform based preconditioning techniques for space fractional diffusion equations}, Numer. Linear Algebra Appl., 30 (2023), p.~e2474, \url{https://doi.org/10.1002/nla.2474}.

\bibitem{Qu2026x}
{\sc W.~Qu, S.-L. Lei, S.~Y. Hon, and Y.-Y. Huang}, {\em Optimal preconditioning techniques for finite volume approximation of three-dimensional conservative space-fractional diffusion equations}, Stud. Appl. Math., 157 (2026), p.~e70271, \url{https://doi.org/10.1111/sapm.70271}.

\bibitem{Saad86}
{\sc Y.~Saad and M.~H. Schultz}, {\em {GMRES: A} generalized minimal residual algorithm for solving nonsymmetric linear systems}, SIAM J. Sci. Stat. Comput., 7 (1986), pp.~856--869, \url{https://doi.org/10.1137/0907058}.

\bibitem{SSC-MC-97}
{\sc S.~Serra-Capizzano}, {\em Optimal, quasi-optimal and superlinear band-{Toeplitz} preconditioners for asymptotically ill-conditioned positive definite {Toeplitz} systems}, Math. Comput., 66 (1997), pp.~651--665, \url{https://doi.org/10.1090/S0025-5718-97-00833-8}.

\bibitem{SSC-SIMAX-99}
{\sc S.~Serra-Capizzano}, {\em Toeplitz preconditioners constructed from linear approximation processes}, SIAM J. Matrix Anal. Appl., 20 (1998), pp.~446--465, \url{https://doi.org/10.1137/S0895479897316904}.

\bibitem{Serra1}
{\sc S.~Serra-Capizzano}, {\em Superlinear {PCG} methods for symmetric {Toeplitz} systems}, Math. Comp., 68 (1999), pp.~793--803.

\bibitem{SSC-LAA-02}
{\sc S.~Serra-Capizzano}, {\em Matrix algebra preconditioners for multilevel {Toeplitz} matrices are not superlinear}, Linear Algebra Appl., 343-344 (2002), pp.~303--319, \url{https://doi.org/10.1016/S0024-3795(01)00361-5}.

\bibitem{She2021}
{\sc Z.-H. She, C.-X. Lao, H.~Yang, and F.-R. Lin}, {\em Banded preconditioners for {Riesz} space fractional diffusion equations}, J. Sci. Comput., 86 (2021), pp.~A2698--A2719, \url{https://doi.org/10.1007/s10915-020-01398-4}.

\bibitem{Sousa2015}
{\sc E.~Sousa and C.~Li}, {\em A weighted finite difference method for the fractional diffusion equation based on the {Riemann–Liouville} derivative}, Appl. Numer. Math., 90 (2015), pp.~22--37, \url{https://doi.org/10.1016/j.apnum.2014.11.007}.

\bibitem{Sun2016}
{\sc H.~Sun, Z.-Z. Sun, and G.-H. Gao}, {\em Some high order difference schemes for the space and time fractional {Bloch-Torrey} equations}, Appl. Math. Comput., 281 (2016), pp.~356--380, \url{https://doi.org/10.1016/j.amc.2016.01.044}.

\bibitem{TZD1}
{\sc W.~Y. Tian, H.~Zhou, and W.~H. Deng}, {\em A class of second order difference approximations for solving space fractional diffusion equations}, Math. Comput., 84 (2015), pp.~1703--1727.

\bibitem{WangH2013}
{\sc H.~Wang and D.~Yang}, {\em Wellposedness of variable-coefficient conservative fractional elliptic differential equations}, SIAM J. Numer. Anal., 51 (2013), pp.~1088--1107, \url{https://doi.org/10.1137/120892295}.

\bibitem{Zeng2022}
{\sc M.-L. Zeng, J.-F. Yang, and G.-F. Zhang}, {\em On $\tau$ matrix-based approximate inverse preconditioning technique for {diagonal-plus-Toeplitz} linear systems from spatial fractional diffusion equations}, J. Comput. Appl. Math., 407 (2022), 114088 (21~pages), \url{https://doi.org/10.1016/j.cam.2022.114088}.

\bibitem{ZhangH2016}
{\sc H.~Zhang, F.~Liu, I.~Turner, and Q.~Yang}, {\em Numerical solution of the time fractional {Black–Scholes} model governing {European} options}, Comput. Math. Appl., 71 (2016), pp.~1772--1783, \url{https://doi.org/10.1016/j.camwa.2016.02.007}.

\bibitem{ZhangX2005}
{\sc X.~Zhang, J.~W. Crawford, L.~K. Deeks, M.~I. Stutter, A.~G. Bengough, and I.~M. Young}, {\em A mass balance based numerical method for the fractional advection-dispersion equation: {Theory} and application}, Water Resour. Res., 41 (2005), p.~W07029, \url{https://doi.org/10.1029/2004WR003818}.

\bibitem{ZhangXue2025}
{\sc X.~Zhang, X.-M. Gu, and Y.-L. Zhao}, {\em Two fast finite difference methods for a class of variable-coefficient fractional diffusion equations with time delay}, Commun. Nonlinear Sci. Numer. Simul., 140 (2025), \url{https://doi.org/10.1016/j.cnsns.2024.108358}.

\bibitem{Zhang24}
{\sc X.~Zhang, X.-M. Gu, Y.-L. Zhao, H.~Li, and C.-Y. Gu}, {\em Two fast and unconditionally stable finite difference methods for {Riesz} fractional diffusion equations with variable coefficients}, Appl. Math. Comput., 462 (2024), 128335 (19~pages), \url{https://doi.org/10.1016/j.amc.2023.128335}.

\bibitem{Zhu2021}
{\sc C.~Zhu, B.~Zhang, H.~Fu, and J.~Liu}, {\em Efficient second-order {ADI} difference schemes for three-dimensional {Riesz} space-fractional diffusion equations}, Comput. Math. Appl., 98 (2021), pp.~24--39, \url{https://doi.org/10.1016/j.camwa.2021.06.016}.

\end{thebibliography}
\end{document}